\makeatletter \@addtoreset{equation}{section} \makeatother
\newcommand{\RR}{{\mathbb  R}}
\DeclareMathOperator{\var}{Var}
\DeclareMathOperator{\cov}{Cov}
\DeclareMathOperator{\Hess}{Hess}
\DeclareMathOperator{\sign}{sign}
\newcommand{\bb}[1]{\mathbb {#1}}
\newtheorem{lem}{Lemma}[section]
\newtheorem{thm}{Theorem}[section]
\newtheorem{prop}{Proposition}[section]
\newtheorem{rem}{Remark}[section]
\newtheorem{cor}{Corollary}[section]
\begin{document}

\begin{frontmatter}
\title{On the isoperimetric constant, covariance inequalities and $L_{p}$%
-Poincar\'{e} inequalities in dimension one} 
\runtitle{Covariance Inequalities} 

\begin{aug}
\author{Adrien Saumard \thanksref{t1}\ead[label=e1]{asaumard@gmail.com}}
\address{CREST, Ensai, Universit{\'e} Bretagne Loire
\\ \printead{e1}}
\thankstext{t1}{Supported by NI-AID grant 2R01 AI29168-04, and by a PIMS postdoctoral fellowship}
\author{Jon A. Wellner\thanksref{t2}\ead[label=e2]{jaw@stat.washington.edu}}
\thankstext{t2}{Supported in part by NSF Grant DMS-1566514, NI-AID grant 2R01 AI291968-04} 

\address{Department of Statistics, University
of Washington, Seattle, WA  98195-4322,\\ \printead{e2}}

\runauthor{Saumard \& Wellner}
\end{aug}

\begin{abstract}
Firstly, we derive in dimension one a new covariance inequality of $L_{1}-L_{\infty}$ 
type that characterizes the isoperimetric constant as the best constant achieving 
the inequality. Secondly, we generalize our result to $L_{p}-L_{q}$ bounds for the covariance. 
Consequently, we recover Cheeger's inequality without using the co-area formula. 
We also prove a generalized weighted Hardy type inequality that is needed to derive our covariance 
inequalities and that is of independent interest. Finally, we explore some consequences 
of our covariance inequalities for $L_{p}$-Poincar\'{e} inequalities and moment bounds. 
In particular, we obtain optimal constants in general $L_{p}$-Poincar\'{e} inequalities for measures 
with finite isoperimetric constant, thus generalizing in dimension one Cheeger's inequality, 
which is a $L_{p}$-Poincar\'{e} inequality for $p=2$, to any real $p\geq 1$.
\end{abstract}

\begin{keyword}[class=AMS]
\kwd[Primary ]{60E15}
\kwd{46N30}
\end{keyword}

\begin{keyword}
\kwd{covariance inequality}
\kwd{covariance formula}
\kwd{isoperimetric constant}
\kwd{Cheeger's inequality}
\kwd{Poincar\'{e} inequality}
\kwd{moment bounds}
\end{keyword}

\end{frontmatter}

\tableofcontents

\bigskip

\vspace{1cm}

\section{The isoperimetric constant}

For a measure $\mu $ on $\mathbb{R}^{d}$ (we will focus on $d=1$), an
isoperimetric inequality is an inequality of the form,%
\begin{equation}
\mu ^{+}\left( A\right) \geq c\min \left\{ \mu \left( A\right) ,1-\mu \left(
A\right) \right\} \text{ ,}  \label{isop_ineq}
\end{equation}%
where $c>0$, $A$ is an arbitrary measurable set in $\mathbb{R}^{d}$ and $\mu
^{+}\left( A\right) $ stands for the $\mu -$perimeter of $A$, defined to be%
\begin{equation*}
\mu ^{+}\left( A\right) =\lim \inf_{h\rightarrow 0^{+}}\frac{\mu \left(
A^{h}\right) -\mu \left( A\right) }{h}\text{ ,}
\end{equation*}%
where $A^{h}=\left\{ x\in \mathbb{R}^{d}:\exists a\in A,\text{ }\left\vert
x-a\right\vert <h\right\} $ is an $h$-neighborhood of $A$. The optimal value
of $c=Is\left( \mu \right) $ in (\ref{isop_ineq}) is referred to as the
\textit{Cheeger isoperimetric constant of} $\mu $. It turns out that the
isoperimetric constant is linked to the best constant in Poincar\'{e}'s
inequality, this is the celebrated Cheeger's inequality: it says that if $%
\lambda >0$ satisfies, for every smooth (i.e. locally Lipschitz) function $g$ on $\mathbb{R}^{d}$,%
\begin{equation}
\lambda \var
\left( g\right) \leq \int \left\vert \nabla g\right\vert ^{2}d\mu 
\text{ ,}  \label{poincare_ineq}
\end{equation}%
with $\var(g)=\mathbb{E}[(g-\mathbb{E}[g])^{2}]=\int \left( g-\int g d\mu \right)^{2} d\mu $
then we can take%
\begin{equation*}
\lambda \geq \left( Is\left( \mu \right) \right) ^{2}/4\text{ .}
\end{equation*}%
On $\mathbb{R}$, the isoperimetric constant achieves the following identity (%
\cite{BobHoud97}, Theorem 1.3),%
\begin{eqnarray}
Is(\mu)  = \mbox{essinf}_{a<x<b} \frac{f(x)}{\min\{ F(x), 1-F(x) \}} ,
\label{IsoperimetricConstant}
\end{eqnarray}
where $f$ is the density of the absolutely continuous part of $\mu$, $F$ is the distribution function of $\mu$, 
$a=\inf \left\{ x:F\left( x\right) >0\right\} $ and $b=\sup \left\{x:F\left( x\right) <1\right\} $.

In addition to being defined via relations on sets (\ref{isop_ineq}), the
isoperimetric constant can be stated through the use of a functional
inequality. Indeed, the isoperimetric constant is also the optimal constant
satisfying the following analytic (Cheeger-type) inequality (see for
instance p.192, \cite{BobHoud97}),%
\begin{equation}
c\int \left\vert g-\text{med}_{\mu }\left( g\right) \right\vert d\mu \leq
\int \left\vert \nabla g\right\vert d\mu \text{ ,}  \label{L1_Poin}
\end{equation}%
where $g$ is an integrable, (locally) Lipschitz function on $\mathbb{R}^{d}$ and $\text{med}_{\mu }\left( g\right)$ 
is the median of $g$ with respect to $\mu$.

Inequality (\ref{L1_Poin}) is also termed an $L_{1}$-Poincar\'{e} inequality.
Instances of $L_{1}$-Poincar\'{e} inequalities could also be considered for a
centering of $g$ by its mean rather than its median, since%
\begin{equation}
\int \left\vert g-\text{med}_{\mu }\left( g\right) \right\vert d\mu \leq
\int \left\vert g-\mathbb{E}_{\mu }\left( g\right) \right\vert d\mu \leq
2\int \left\vert g-\text{med}_{\mu }\left( g\right) \right\vert d\mu \text{ .%
}  \label{mean_median}
\end{equation}%
\cite{MR1396954}, Chapter 14, studied related optimal constants in Sobolev-type inequalities defined from Orlicz-norm.\\
In a different direction, inequality (\ref{L1_Poin}) may also be seen as a special instance of a covariance inequality.
Indeed, let us denote $\sign(x)=2\mathbf{1}_{\left\{x\geq 0 \right\}} -1$ the sign of a real number $x$. Then by definition of a median, it follows that%
\begin{equation*}
\mathbb{E}_{\mu }\left[ \sign\left( g-\text{med}_{\mu }\left( g\right)
\right) \right] =\int \sign\left( g-\text{med}_{\mu }\left( g\right)
\right) d\mu =0
\end{equation*}%
and so,%
\begin{equation}
\int \left\vert g-\text{med}_{\mu }\left( g\right) \right\vert d\mu =
\cov_{\mu }\left( g-\text{med}_{\mu }\left( g\right) ,\text{sign}\left( g-\text{%
med}_{\mu }\left( g\right) \right) \right) \text{ ,}  \label{formula_cov_L1}
\end{equation}%
where $\cov_{\mu }\left( g,h\right) =\int g(h-\mathbb{E}_{\mu }\left[ h\right] )d\mu $.

A natural question then arises: can the isoperimetric constant be defined as
the optimal constant in a covariance inequality bounding, for suitable
functions $g$ and $h$, their covariance 
$\cov_{\mu }\left( g,h\right) $ by the $L_{1}$ moment of $\nabla g,$ that is $%
\int \left\vert \nabla g\right\vert d\mu $?   
Surely, the upper-bound on the
covariance will also depend on some function of the magnitude of $\nabla h$.

Recently, \cite{Menz-Otto:2013} have established in dimension
one what they call an \textit{asymmetric Brascamp-Lieb inequality}: if $\mu $ is a
strictly log-concave measure on $\mathbb{R}$, then for smooth and square
integrable functions $g$ and $h$ on $\mathbb{R}$, 
\begin{equation}
\left\vert 
\cov_{\mu }\left( g,h\right) \right\vert \leq \left\Vert g^{\prime }\right\Vert_{1} 
\left\Vert \frac{h^{\prime }}{\varphi ^{\prime \prime }}\right\Vert_{\infty }\text{ ,}  \label{MenzOtto}
\end{equation}
where $\varphi $ is the potential of the measure $\mu $, defined by the
relation $d\mu =\exp \left( -\varphi \right) dx$, and the norms are taken with
respect to $\mu $. This result has been generalized by \cite%
{CarlenCordero-ErausquinLieb} to higher dimension and to $L_{p}-L_{q}$
versions (rather than $L_{1}-L_{\infty }$). For a strictly log-concave
measure $\mu $ on $\mathbb{R}^{d}$, $d\mu =\exp \left( -\varphi \right)
dx$, any square integrable locally Lipschitz functions $g$ and $h$ and 
$p\in \left[ 2,+\infty \right) $, $p^{-1}+q^{-1}=1$, it holds that
\begin{equation*}
\left\vert  \cov_{\mu }\left( g,h\right) \right\vert \leq \left\Vert 
\lambda _{\min}^{\left( 2-p\right) /p}\Hess_{\varphi }^{-1/p}\nabla g\right\Vert
_{q}\left\Vert \Hess_{\varphi }^{-1/p}\nabla h\right\Vert _{p}\text{ ,}
\end{equation*}
where $\lambda _{\min }\left( x\right) $ is the least eigenvalue of 
$\Hess_{\varphi \left( x\right) }$.

Such results seem close in their form to what we would want to have to
generalize (\ref{L1_Poin}). However, it is well-known (see for instance \cite%
{Ledoux:04}) that for a log-concave measure $\mu $, if $\lambda _{\min
}\left( x\right) \geq \rho >0$ - which means that $\mu $ is strongly
log-concave, see for instance \cite{SauWel2014} - then 
\begin{equation*}
\frac{Is\left( \mu \right) ^{2}}{4}\leq \lambda _{P}\leq 36Is\left( \mu
\right) ^{2}\text{ and }\rho \leq \lambda _{P}\text{ ,}
\end{equation*}
where $\lambda _{P}$ is the (optimal) Poincar\'{e} constant of $\mu $.\ In
particular, Inequality (\ref{MenzOtto}) implies in this case,%
\begin{equation}
\left\vert \cov_{\mu }\left( g,h\right) \right\vert \leq \rho ^{-1}\left\Vert g^{\prime}
\right\Vert _{1}\left\Vert h^{\prime }\right\Vert _{\infty }\text{ ,}
\label{cov_rho}
\end{equation}
for smooth and square integrable functions $g$ and $h$ on $\mathbb{R}$. 
But $\rho ^{-1}\geq (36Is\left( \mu \right)^{2})^{-1}$ and we a priori don't know if the right-hand
side of (\ref{cov_rho}) could be changed to $uIs\left( \mu \right)^{-2}\left\Vert g^{\prime
}\right\Vert _{1}\left\Vert h^{\prime }\right\Vert _{\infty }$, where $u$
would be a universal constant - note that we will give a positive answer to this question in the following. 
Hence the connection between inequalities of
the form of (\ref{cov_rho}) and the isoperimetric constant is not
straightforward.

One of the reasons for this difficulty is that if we try to approximate the
function sign$\left( g-\text{med}_{\mu }\left( g\right) \right) $ appearing
in (\ref{formula_cov_L1}) by a sequence of smooth functions $g_{n}$ in order
to use inequality (\ref{MenzOtto}) or (\ref{cov_rho}) and take the limit,
then the sequence of sup-norms 
$\left\Vert g_{n}^{\prime }\right\Vert_{\infty }$ will diverge to infinity and it is thus hopeless to recover the 
$L_{1}$-Poincar\'{e} inequality (\ref{L1_Poin}) at the limit. Another limitation of
asymmetric Brascamp-Lieb inequalities is that they hold for strictly
log-concave measures while our expected covariance inequality would ideally
be valid for any measure.

In Section \ref{section_L1_Cov}, taking the dimension to be one, we establish 
a covariance inequality that is valid for any measure on $%
\mathbb{R}$ and that indeed generalizes the $L_{1}$-Poincar\'{e} inequality (%
\ref{L1_Poin}). Then we will consider in Section \ref{section_Lp_Cov}
extensions of our covariance inequalities that are related to $L_{p}$-Poincar\'{e} inequalities, 
for $p\geq 1$. In particular, we will prove and make use of some
generalized (weighted) Hardy-type inequalities, that are of independent interest. We will explore further
consequences in terms of moment estimates of our new covariance inequalities
in Section \ref{section_moment_estimates}.

\section{A $L_{1}-L_{\infty}$ covariance inequality\label{section_L1_Cov}}

All norms, expectations and covariances will be taken with respect to a probability measure 
$\mu $ on $\mathbb{R}$ so that we will skip related indices in the notations.

Notice that if a $L_{1}$-Poincar\'{e} inequality (with a centering by the
median) holds for a measure $\mu $, that is there exists $c_{1}>0$ such
that for every smooth integrable $g$, 
\begin{equation}
c_{1}\left\Vert g-\text{med}\left( g\right) \right\Vert _{1}\leq \left\Vert
g^{\prime }\right\Vert _{1}\text{ ,}
\label{ineq_cheeger_dim1}
\end{equation}
then if $h$ is in $L_{\infty }$, we get
\begin{eqnarray}
\left\vert  \cov \left( g,h\right) \right\vert =\left\vert \mathbb{E}\left[ 
\left( g-\text{med}\left( g\right) \right) \left( h-\mathbb{E}\left[ h\right] \right) \right] \right\vert   
\leq \left\Vert g-\text{med}\left( g\right) \right\Vert _{1}\left\Vert h-%
\mathbb{E}\left[ h\right] \right\Vert _{\infty } \notag \\
\leq  c_{1}^{-1}\left\Vert
g^{\prime }\right\Vert _{1}\left\Vert h_{0}\right\Vert _{\infty }\text{ ,}
\label{ineq_cov_L1_Poin}
\end{eqnarray}

where $h_{0}:=h-\mathbb{E}\left[ h\right] $. Moreover, the optimal constant
in (\ref{ineq_cheeger_dim1}) is $c_{1}=Is\left( \mu \right) $.

The following theorem states an inequality in dimension one that is sharper in terms of the control of $h$.

\begin{thm}
\label{th_cov_isop copy(1)}Let $\mu $ be a probability measure with a
positive density $f$ on $\mathbb{R}$, cumulative distribution $F$ and median 
$m\in \mathbb{R}$. Let $g\in L_{\infty }\left( F\right) $ and $h\in L_{1}\left( F\right) $. 
Assume also that $g$ and $h$ are absolutely continuous. Then we have,
\begin{eqnarray}
&& \left\vert 
\cov \left( g,h\right) \right\vert \notag \\
& \leq & \label{ineq_cov_L1_Linfty} Is\left( \mu \right) ^{-1}\max \left\{ \sup_{a<x\leq m}\left\vert 
\frac{\int_{a}^{x}hdF}{F\left( x\right) }-\mathbb{E}\left[ h\right]
\right\vert ,\text{ }\sup_{b>x>m}\left\vert \frac{\int_{x}^{b}hdF}{1-F\left(
x\right) }-\mathbb{E}\left[ h\right] \right\vert \right\} \int_{\mathbb{R}
}\left\vert g^{\prime }\right\vert dF \qquad
\end{eqnarray}
where $a=\inf \left\{ x:F\left( x\right) >0\right\} $ and $b=\sup \left\{
x:F\left( x\right) <1\right\} .$
\end{thm}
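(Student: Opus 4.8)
The plan is to derive an exact integral representation of $\cov(g,h)$ and then estimate it through the identity \eqref{IsoperimetricConstant} for $Is(\mu)$. We may assume $Is(\mu)>0$, $\int_{\mathbb{R}}|g'|\,dF<\infty$ and that the maximum in \eqref{ineq_cov_L1_Linfty} is finite, since otherwise the right-hand side is $+\infty$; and if that maximum vanishes then $h=\mathbb{E}[h]$ $F$-a.e., so $\cov(g,h)=0$ and the bound is trivial.

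First I would set $h_{0}:=h-\mathbb{E}[h]$, so $\mathbb{E}[h_{0}]=0$ and
\[
\cov(g,h)=\mathbb{E}\bigl[(g-g(m))h_{0}\bigr]=\int_{\mathbb{R}}(g(x)-g(m))\,h_{0}(x)\,dF(x),
\]
and introduce $H(t):=\int_{a}^{t}h_{0}\,dF=\int_{a}^{t}h\,dF-F(t)\,\mathbb{E}[h]$, which because $\mathbb{E}[h_{0}]=0$ also equals $-\int_{t}^{b}h_{0}\,dF=-\bigl(\int_{t}^{b}h\,dF-(1-F(t))\mathbb{E}[h]\bigr)$ and which vanishes outside $(a,b)$, so $H(a)=H(b)=0$. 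Substituting $g(x)-g(m)=\int_{m}^{x}g'(t)\,dt$ and integrating by parts over $(a,b)$ — $H$ is absolutely continuous with $H'=h_{0}f$ a.e., and the boundary terms vanish because $H(a)=H(b)=0$ while $g$ is bounded, so one may exhaust $(a,b)$ by compact intervals and take the limit — I get
\[
\cov(g,h)=-\int_{\mathbb{R}}g'(t)H(t)\,dt=-\int_{\mathbb{R}}g'(t)\,\frac{H(t)}{f(t)}\,dF(t),
\]
hence $\lvert\cov(g,h)\rvert\le\bigl\lVert H/f\bigr\rVert_{L_{\infty}(F)}\int_{\mathbb{R}}|g'|\,dF$. (Equivalently one reaches this representation by Fubini's theorem after truncating $g'$ to $g'\mathbf{1}_{[-n,n]}$ and letting $n\to\infty$.)

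It then remains to bound $\lVert H/f\rVert_{L_{\infty}(F)}$ by $Is(\mu)^{-1}$ times the maximum in \eqref{ineq_cov_L1_Linfty}. Since $f>0$, $F$ is continuous and strictly increasing, so $F(m)=1/2$ and $\min\{F(t),1-F(t)\}$ equals $F(t)$ for $t\le m$ and $1-F(t)$ for $t\ge m$. For $t\le m$ I factor
\[
\frac{|H(t)|}{f(t)}=\frac{F(t)}{f(t)}\cdot\left|\frac{\int_{a}^{t}h\,dF}{F(t)}-\mathbb{E}[h]\right|\le Is(\mu)^{-1}\sup_{a<x\le m}\left|\frac{\int_{a}^{x}h\,dF}{F(x)}-\mathbb{E}[h]\right|,
\]
using $F(t)/f(t)=\min\{F(t),1-F(t)\}/f(t)\le Is(\mu)^{-1}$, which holds almost everywhere by \eqref{IsoperimetricConstant}. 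For $t\ge m$ I use instead $H(t)=-\bigl(\int_{t}^{b}h\,dF-(1-F(t))\mathbb{E}[h]\bigr)$ together with $(1-F(t))/f(t)\le Is(\mu)^{-1}$ to obtain the analogous bound in terms of $\sup_{b>x>m}\lvert\int_{x}^{b}h\,dF/(1-F(x))-\mathbb{E}[h]\rvert$. Taking the essential supremum over $t\in(a,b)$ and combining with the previous display gives \eqref{ineq_cov_L1_Linfty}.

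The only genuinely delicate point is the representation $\cov(g,h)=-\int_{\mathbb{R}}g'H\,dt$: one must deal with the fact that $g'$ need not be Lebesgue-integrable on $\mathbb{R}$ (only $g\in L_{\infty}$ is assumed, which is what forces the truncation/exhaustion) and verify that the boundary terms at $a$ and $b$ — possibly $\pm\infty$ — really vanish. The pointwise factorization above is independent of that step and yields a bound $|H(t)|\le c\,f(t)$ with $c<\infty$, whence $g'H\in L_{1}(\mathrm{Leb})$, which legitimizes the limiting argument; the remainder is a direct computation using \eqref{IsoperimetricConstant}.
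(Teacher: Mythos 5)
Your proof is correct, and it reaches the same key intermediate bound as the paper by a genuinely different, more self-contained route. The paper first invokes the kernel covariance representation $\cov(g,h)=\int\!\!\int g'(x)K_{\mu}(x,y)h'(y)\,dx\,dy$ of Lemma \ref{prop_cov_rep} (Hoeffding/Menz--Otto) and then uses the identities of Lemma \ref{prop_formulas_kernel} to rewrite $\int K_{\mu}(x,y)h'(y)\,dy$ as $F(x)\,\mathbb{E}[h]-\int_{-\infty}^{x}h\,dF$, i.e.\ exactly your $-H(x)$; from there the estimation (split at the median, factor out $F/f$ or $(1-F)/f$, and apply \eqref{IsoperimetricConstant}) is identical to yours. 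You instead derive the representation $\cov(g,h)=-\int_{\mathbb{R}}g'H\,dt$ directly from the definition of the covariance by writing $g(x)-g(m)=\int_{m}^{x}g'$ and integrating by parts (equivalently Fubini) against $H(t)=\int_{a}^{t}(h-\mathbb{E}[h])\,dF$, with the truncation/exhaustion of $(a,b)$ justified a priori by the bound $|H|\le c f$ and the vanishing of $H$ at $a$ and $b$ together with the boundedness of $g$. What your route buys: it bypasses the kernel machinery entirely (no appeal to Lemmas \ref{prop_cov_rep} and \ref{prop_formulas_kernel}), it never differentiates $h$, so absolute continuity of $h$ is not needed (only $h\in L_{1}(F)$), and the integrability bookkeeping is handled explicitly rather than delegated to the cited covariance-identity results; what the paper's route buys is that the same kernel representation is the engine for the $L_{p}$--$L_{q}$ extensions of Section \ref{section_Lp_Cov}, so proving Theorem \ref{th_cov_isop copy(1)} from it keeps the whole paper on one common identity. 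Your handling of the degenerate cases ($Is(\mu)=0$, infinite right-hand side, or the maximum equal to zero forcing $H\equiv 0$ and hence $\cov(g,h)=0$) is also sound.
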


Before proving Theorem \ref{th_cov_isop copy(1)}, let us recall a
representation formula for the covariance of two functions which first appeared in  
\cite{Menz-Otto:2013} and was further studied in \cite{saumwellner2017efron}.
We define a non-negative and symmetric kernel $K_{\mu }$ on $\mathbb{R}^2$ by  
\begin{equation}
K_{\mu }\left(x,y\right) =F\left( x\wedge y\right) -F\left( x\right) F\left( y\right), 
\qquad  \mbox{for all} \left( x,y\right) \in \mathbb{R}^{2} .
\label{def_kernel}
\end{equation}
where 
$F\left( x\right) =F_{\mu }\left( x\right) =\mu \left( \left( -\infty ,x\right] \right) $ 
is the  distribution function associated with the 
probability measure $\mu $ on $(\mathbb{R}, {\cal B})$.

\begin{lem}[Corollary 2.2, \cite{saumwellner2017efron}]
\label{prop_cov_rep}
If $g$ and $h$ are absolutely continuous and 
$g\in L_{p}(F)$, $h\in L_{q}(F)$ for some $p\in \lbrack 1,\infty ]$ and $%
p^{-1}+q^{-1}=1$, then 
\begin{equation}
\cov(g,h)
=\int \!\!\!\int_{\mathbb{R}^{2}}g^{\prime }(x)K_{\mu}(x,y)h^{\prime }(y)dxdy \text{.}
\label{cov_id_kernel}
\end{equation}
\end{lem}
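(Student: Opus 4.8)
The plan is to deduce (\ref{cov_id_kernel}) from Hoeffding's covariance identity together with the fundamental theorem of calculus written in ``layer-cake'' form. Let $X,X'$ be i.i.d.\ with law $\mu$. Since $p\geq1$ forces $g,h\in L_{1}(F)$ and H\"older's inequality gives $gh\in L_{1}(F)$, all the quantities below are finite, and expanding the product while using that $X$ and $X'$ are i.i.d.\ yields
\begin{equation*}
\cov(g,h)=\tfrac{1}{2}\,\mathbb{E}\!\left[\bigl(g(X)-g(X')\bigr)\bigl(h(X)-h(X')\bigr)\right].
\end{equation*}
Absolute continuity of $g$ on every compact interval gives, for all $x,y\in\mathbb{R}$,
\begin{equation*}
g(x)-g(y)=\int_{\mathbb{R}}g'(s)\bigl(\mathbf{1}_{\{s\leq x\}}-\mathbf{1}_{\{s\leq y\}}\bigr)\,ds ,
\end{equation*}
and similarly for $h$. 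Substituting these two representations and interchanging expectation with integration (the delicate point, see below) rewrites the right-hand side as
\begin{equation*}
\tfrac{1}{2}\int\!\!\!\int_{\mathbb{R}^{2}}g'(s)\,h'(t)\,\mathbb{E}\Bigl[\bigl(\mathbf{1}_{\{s\leq X\}}-\mathbf{1}_{\{s\leq X'\}}\bigr)\bigl(\mathbf{1}_{\{t\leq X\}}-\mathbf{1}_{\{t\leq X'\}}\bigr)\Bigr]\,ds\,dt .
\end{equation*}

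It then remains to evaluate the inner expectation. Expanding it into four terms and using independence together with the common law of $X$ and $X'$, it equals $2\bigl(1-F((s\vee t)^{-})\bigr)-2\bigl(1-F(s^{-})\bigr)\bigl(1-F(t^{-})\bigr)$. Since $F$ has at most countably many atoms, $F(x^{-})=F(x)$ for Lebesgue-a.e.\ $x$, so for a.e.\ $(s,t)$ this simplifies, using $F(s)+F(t)=F(s\wedge t)+F(s\vee t)$, to $2\bigl(F(s\wedge t)-F(s)F(t)\bigr)=2K_{\mu}(s,t)$ with $K_{\mu}$ as in (\ref{def_kernel}). Plugging this back in produces exactly (\ref{cov_id_kernel}). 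Note in passing that the random variable $\bigl(\mathbf{1}_{\{s\leq X\}}-\mathbf{1}_{\{s\leq X'\}}\bigr)\bigl(\mathbf{1}_{\{t\leq X\}}-\mathbf{1}_{\{t\leq X'\}}\bigr)$ is always nonnegative (its two factors are never of opposite sign), consistently with $2K_{\mu}(s,t)\geq0$.

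The hard part is legitimising the interchange of expectation and integration, and more generally making sense of the double integral in (\ref{cov_id_kernel}): under the bare hypotheses $g\in L_{p}(F)$, $h\in L_{q}(F)$ the integrand $g'(s)K_{\mu}(s,t)h'(t)$ need not be absolutely integrable on $\mathbb{R}^{2}$, so the right-hand side of (\ref{cov_id_kernel}) has to be understood as an iterated integral. The route I would take is to prove the identity first for functions $g,h$ that are Lipschitz and constant outside a compact interval — where $g'$ and $h'$ are bounded with compact support, every integral is plainly finite, and the interchange above is the classical Fubini theorem — and then to extend to the general case by approximation, using the continuity of $\cov(\cdot,\cdot)$ under $L_{p}(F)\times L_{q}(F)$ convergence on the left-hand side and an integration-by-parts control of the iterated integral on the right-hand side. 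This approximation step is the technical core of the statement; it is carried out in \cite{saumwellner2017efron}, and granting it the identity follows from the elementary computation above.
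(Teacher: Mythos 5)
Your argument is correct, but it takes a genuinely different route from the paper's. The paper states the result as Corollary 2.2 of \cite{saumwellner2017efron} and then derives it from Hoeffding's classical covariance representation: it applies Hoeffding's formula to the pair $(g(Z),h(Z))$ for nondecreasing, left-continuous $g,h$, identifies the transformed kernel as $K_{\mu}(g^{-1}(x),h^{-1}(y))$ via generalized inverses, changes variables to obtain $\cov(g(Z),h(Z))=\int\!\!\int K_{\mu}(u,v)\,dg(u)\,dh(v)$, and finally passes to absolutely continuous functions by writing them as differences of monotone functions. You instead symmetrize with an independent copy, $\cov(g,h)=\tfrac{1}{2}\E\bigl[(g(X)-g(X'))(h(X)-h(X'))\bigr]$, insert the fundamental theorem of calculus and apply Fubini, computing the inner expectation to be $2K_{\mu}(s,t)$ for a.e.\ $(s,t)$; this is in effect a self-contained reproof of Hoeffding's lemma in the special case needed, rather than an appeal to it, and it has the side benefits of making the nonnegativity of $K_{\mu}$ transparent and of handling possible atoms of $F$ explicitly. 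What each route buys: the paper's derivation explains where the kernel comes from (a transformed Hoeffding kernel) and reuses a classical result; yours is more elementary and avoids generalized inverses and the Jordan decomposition. Both face the same technical caveat, namely that under the bare hypotheses $g\in L_{p}(F)$, $h\in L_{q}(F)$ the double integral need not converge absolutely, so the Fubini step (or, in the paper's route, the limiting passage from monotone to absolutely continuous functions) requires an approximation argument; the paper silently delegates this to \cite{saumwellner2017efron}, and you flag it and defer the same step to the same reference, so on this point your proof is no less complete than the paper's and is more explicit about where the remaining work lies.
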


In fact, Lemma \ref{prop_cov_rep} can be seen as a special instance of a 
covariance representation lemma due to \citep{MR0004426} (see also \citep{MR1307621} 
for a translation of the German original paper into English). 
Hoeffding's covariance representation is as follows: Let $X$ and $Y$ be two
real random variables with finite second moments. Then%
\begin{equation*}
\cov \left( X,Y\right) =\int \! \!\! \int K_{\left( X,Y\right) }\left( x,y\right) dxdy
\text{ ,}
\end{equation*}
with $K_{\left( X,Y\right) }\left( x,y\right) =F_{\left( X,Y\right) }\left(
x,y\right) -F_{X}\left( x\right) F_{Y}\left( y\right) =\mathbb{P}\left(
X\leq x,Y\leq y\right) -\mathbb{P}\left( X\leq x\right) \mathbb{P}\left(
Y\leq y\right) $.

For a real random variable $Z$ with probability distribution $\mu$ on $\RR$, 
take $X=g\left( Z\right) $ and $Y=h\left(Z\right) $, 
where $g$ and $h$ are nondecreasing and left-continuous. Then by
Hoeffding's covariance representation,
\begin{equation*}
\cov 
\left( g\left( Z\right) ,h\left( Z\right) \right) 
=\int \!\!\! \int K_{\left(
g\left( Z\right) ,h\left( Z\right) \right) }\left( x,y\right) dxdy\text{ .}
\end{equation*}
Furthermore,
\begin{equation*}
K_{\left( g\left( Z\right) ,h\left( Z\right) \right) }\left( x,y\right) =
\mathbb{P}\left( g\left( Z\right) \leq x,h\left( Z\right) \leq y\right) -
\mathbb{P}\left( g\left( Z\right) \leq x\right) \mathbb{P}\left( h\left(Z\right) \leq y\right)
\end{equation*}
and by considering the generalized inverses $g^{-1}\left( x\right) =\inf\left\{ z:g\left( z\right) \geq x\right\} $ 
and $h^{-1}$  of $g$ and $h$ respectively, it follows that
\begin{eqnarray*}
K_{\left( g\left( Z\right) ,h\left( Z\right) \right) }\left( x,y\right) 
&=&
\mathbb{P}\left( Z\leq g^{-1}\left( x\right) ,Z\leq h^{-1}(y)\right) -
\mathbb{P}\left( Z\leq g^{-1}\left( x\right) \right) \mathbb{P}\left( Z\leq h^{-1}(y)\right) \\
&=&K_{\left( Z,Z\right) }\left( g^{-1}\left( x\right) ,h^{-1}(y)\right)
=:K_{Z}\left( g^{-1}\left( x\right) ,h^{-1}(y)\right) \text{ .}
\end{eqnarray*}
where $K_Z = K_{\mu}$.
Hence, by change of variables,
\begin{eqnarray*}
\cov
\left( g\left( Z\right) ,h\left( Z\right) \right) 
= \int \!\!\! \int K_{Z}\left( g^{-1}\left( x\right) ,h^{-1}(y)\right) dxdy 
= \int \!\!\! \int K_{Z}\left( u,v\right) dg\left( u\right) dh(v)\text{ .}
\end{eqnarray*}
Considering differences of monotone functions and restricting to absolutely
continuous functions now gives Menz and Otto's covariance identity: 
for $g\left( Z\right) $ and $h\left( Z\right) $ in $L_{2}$ and absolutely continuous,%
\begin{equation*}
\cov \left( g\left( Z\right) ,h\left( Z\right) \right) 
=\int \!\!\! \int g^{\prime}\left( x\right) K_{Z}\left( x,y\right) h^{\prime }\left( y\right) dxdy\text{.}
\end{equation*}

Hoeffding's covariance representation has been extended to a multi-dimensional setting, 
by considering the so-called cumulant between several random variables rather 
than the covariance (\cite{MR958217}). However, it remains unclear how such 
extension of Hoeffding's lemma could be used to generalize our results, 
for instance concerning $L_p$-Poincar\'{e} inequalities, to a multivariate setting.
It is also worth mentioning that the fact that the covariance representation (\ref{cov_id_kernel}) 
of Lemma \ref{prop_cov_rep} is valid for any probability measure $\mu$ is specific to dimension one. 
More precisely, \cite{MR1836739} proved that, in dimension greater than two, 
a covariance identity of the form of (\ref{cov_id_kernel}) 
implies that $\mu$ is Gaussian. 
\cite{MR1836739} also proved some genuine concentration inequalities from 
such covariance identities. Further extensions of covariance inequalities and 
related deviation inequalities for infinitely divisible and stable random vectors 
have been obtained in \cite{MR1920106} and \cite{MR2060306}.

We will also need the following formulas, which are in fact special 
instances of the previous covariance representation formula.

\begin{lem}[Corollary 2.1, \cite{saumwellner2017efron}]
\label{prop_formulas_kernel}
For an absolutely continuous function $h\in L_{1}(F)$, 
\begin{equation}
F(z)\int_{\mathbb{R}}hd F-\int_{-\infty }^{z}hdF
=\int_{\mathbb{R}}K_{\mu }\left( z,y\right) h^{\prime }(y)dy  
\label{rep_g_prime_L1-MRLa}
\end{equation}
and
\begin{equation}
- (1-F(z)) \int_{\mathbb{R}}hd F + \int_{(z,\infty)} h dF
=\int_{\mathbb{R}}K_{\mu }\left( z,y\right) h^{\prime }(y)dy . 
\label{rep_g_prime_L1-MRLb}
\end{equation}
\end{lem}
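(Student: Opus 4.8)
The plan is to recognise both formulas as limiting (degenerate) cases of the covariance representation of Lemma \ref{prop_cov_rep}, obtained by pushing the first argument $g$ towards an indicator. Heuristically, taking $g=\mathbf{1}_{(z,\infty)}$ in \eqref{cov_id_kernel} makes $g'$ a Dirac mass at $z$, so the double integral collapses to $\int_{\mathbb{R}}K_\mu(z,y)h'(y)\,dy$, while the left-hand side becomes $\cov\big(\mathbf{1}_{(z,\infty)},h\big)=\int_{(z,\infty)}h\,dF-(1-F(z))\int h\,dF$; this is \eqref{rep_g_prime_L1-MRLb}, and \eqref{rep_g_prime_L1-MRLa} then follows from $\mathbf{1}_{(-\infty,z]}=1-\mathbf{1}_{(z,\infty)}$ together with $\cov(1,h)=0$. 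Since Lemma \ref{prop_cov_rep} needs $g$ absolutely continuous, I would make this rigorous by a one-sided smoothing of the indicator.

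Concretely: fix $z\in\mathbb{R}$, and for $\eps>0$ let $g_\eps$ be the Lipschitz function equal to $0$ on $(-\infty,z]$, equal to $1$ on $[z+\eps,\infty)$ and affine in between, so that $g_\eps$ is absolutely continuous, $0\le g_\eps\le 1$ (hence $g_\eps\in L_\infty(F)$), and $g_\eps'=\eps^{-1}\mathbf{1}_{(z,z+\eps)}$ a.e. Lemma \ref{prop_cov_rep} with $p=\infty$, $q=1$ then gives
\begin{equation*}
\cov(g_\eps,h)=\iint_{\mathbb{R}^2}g_\eps'(x)K_\mu(x,y)h'(y)\,dx\,dy=\frac1\eps\int_z^{z+\eps}\Phi(x)\,dx,\qquad \Phi(x):=\int_{\mathbb{R}}K_\mu(x,y)h'(y)\,dy .
\end{equation*}
On the left, $g_\eps\uparrow\mathbf{1}_{(z,\infty)}$ pointwise and $|g_\eps h|\le|h|\in L_1(F)$, so by monotone/dominated convergence $\cov(g_\eps,h)\to\int_{(z,\infty)}h\,dF-(1-F(z))\int h\,dF$. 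On the right, the Lebesgue differentiation theorem gives $\eps^{-1}\int_z^{z+\eps}\Phi(x)\,dx\to\Phi(z)$ for a.e.\ $z$; moreover, since here $F$ is continuous, $x\mapsto K_\mu(x,y)$ is continuous for every $y$ and $0\le K_\mu(x,y)=F(x\wedge y)(1-F(x\vee y))\le\min\{F(y),1-F(y)\}$ is dominated in $y$ by the integrable $\min\{F(y),1-F(y)\}\,|h'(y)|$, so $\Phi$ is in fact continuous and the limit holds for every $z$. Equating the two limits yields \eqref{rep_g_prime_L1-MRLb}; running the same computation with $1-g_\eps$ in place of $g_\eps$ (equivalently, subtracting off $\cov(1,h)=0$) yields \eqref{rep_g_prime_L1-MRLa}.

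The algebra is trivial, so the only genuine obstacle is the interchange of limits in the second step. The covariance side is immediate from $h\in L_1(F)$; the kernel side needs the absolute convergence of $\int_{\mathbb{R}}\min\{F(y),1-F(y)\}\,|h'(y)|\,dy$, which is exactly the integrability already underlying Lemma \ref{prop_cov_rep}, hence costs nothing here. As an independent check one can bypass the limiting argument altogether and verify \eqref{rep_g_prime_L1-MRLa}--\eqref{rep_g_prime_L1-MRLb} by hand: split $\Phi(z)$ at $y=z$ using $K_\mu(z,y)=F(y)(1-F(z))$ for $y\le z$ and $K_\mu(z,y)=F(z)(1-F(y))$ for $y> z$, integrate by parts in each piece --- the boundary terms $F(y)h(y)$ as $y\to-\infty$ and $(1-F(y))h(y)$ as $y\to+\infty$ vanish under the integrability hypotheses, and the two $h(z)$ terms cancel --- and rearrange with $\int h\,dF=\int_{-\infty}^z h\,dF+\int_{(z,\infty)}h\,dF$. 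One notes in passing that the two left-hand sides of \eqref{rep_g_prime_L1-MRLa} and \eqref{rep_g_prime_L1-MRLb} are trivially equal, their difference being $\int h\,dF-\int h\,dF=0$.
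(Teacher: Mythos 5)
Your argument is correct and follows exactly the route the paper itself indicates: the paper gives no separate proof of Lemma \ref{prop_formulas_kernel}, merely citing Corollary 2.1 of \cite{saumwellner2017efron} and remarking that the formulas are ``special instances of the previous covariance representation formula,'' which is precisely what you make rigorous by taking $g$ to be a smoothed indicator of $(z,\infty)$ in Lemma \ref{prop_cov_rep} and passing to the limit (with the direct integration-by-parts verification as a sound cross-check). No substantive difference from the paper's approach.
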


We are now able to give a proof of Theorem \ref{th_cov_isop copy(1)}.
\begin{proof}
Using the notation of Theorem \ref{th_cov_isop copy(1)},  Lemma \ref{prop_cov_rep} yields
\begin{eqnarray*}
\lefteqn{\left\vert \cov\left( g,h\right) \right\vert} \\
&=&\left\vert \int
\!\!\int g^{\prime }\left( x\right) K_{\mu }\left( x,y\right) h^{\prime
}\left( y\right) dxdy\right\vert \\
&=&\left\vert \int g^{\prime }\left( x\right) \left( \int K_{\mu }\left(
x,y\right) h^{\prime }\left( y\right) dy\right) dx\right\vert \\
&\leq &\int \left\vert g^{\prime }\left( x\right) \right\vert \left\vert
\int K_{\mu }\left( x,y\right) h^{\prime }\left( y\right) dy\right\vert dx \\
&=&\int \left\vert g^{\prime }\left( x\right) \right\vert \frac{\left\vert
\int K_{\mu }\left( x,y\right) h^{\prime }\left( y\right) dy\right\vert }{%
f\left( x\right) }f\left( x\right) dx \\
& \leq & \sup_{a<x\le m} \frac{| \int_{\RR} K_{\mu} (x,y) h'(y) dy|}{f(x)} \int_{(-\infty,m]} |g'| d F  
           +  \sup_{m<x<b} \frac{| \int_{\RR} K_{\mu} (x,y) h'(y) dy|}{f(x)} \int_{(m, \infty)} |g'| d F \text{ .}
\end{eqnarray*}%
Now, by using (\ref{rep_g_prime_L1-MRLa}) and (\ref{rep_g_prime_L1-MRLb}), we get
\begin{eqnarray*}
\lefteqn{\sup_{a<x\le m} \frac{| \int_{\RR} K_{\mu} (x,y) h'(y) dy|}{f(x)} \int_{(-\infty,m]} |g'| d F 
 \ + \ \sup_{m<x<b} \frac{| \int_{\RR} K_{\mu} (x,y) h'(y) dy|}{f(x)} \int_{(m, \infty)} |g'| d F  } \\
& \le & \max \left \{ \sup_{a<x\le m} \frac{| \frac{\int_{-\infty}^x h dF}{F(x)} - \mathbb{E}\left[ h\right]|}{f(x)/F(x)} \ , \  
                       \sup_{m<x<b} \frac{| \frac{\int_{(x, \infty)} h dF }{1-F(x)}- \mathbb{E}\left[ h\right]|}{f(x)/(1-F(x))} \right \} \int_{\RR} | g' | dF \\
& \le & (Is(\mu))^{-1}\max \left \{ \sup_{a<x \le m} \left\vert \frac{\int_{-\infty}^x h dF}{F(x)} - \mathbb{E}\left[ h\right]\right\vert \ , \  
                       \sup_{m<x<b} \left\vert \frac{\int_{(x, \infty)} h dF }{1-F(x)}- \mathbb{E}\left[ h\right]\right\vert \right \} \int_{\RR} | g' | dF \text{ .}
\end{eqnarray*}     
by using (\ref{IsoperimetricConstant}) in the last line.               
\end{proof}

\begin{rem}
The proof of Theorem \ref{th_cov_isop copy(1)} allows to give other variants of covariance inequalities. 
Indeed, we have 
\begin{eqnarray*}
\lefteqn{\left\vert \cov\left( g,h\right) \right\vert \leq \int \left\vert g^{\prime }\left( x\right) \right\vert \frac{\left\vert
\int K_{\mu }\left( x,y\right) h^{\prime }\left( y\right) dy\right\vert }{%
f\left( x\right) }f\left( x\right) dx} \\
&\leq &\sup_{x\in \mathbb{R}}\left\{ \frac{\left\vert \int K_{\mu }\left(
x,y\right) h^{\prime }\left( y\right) dy\right\vert }{f\left( x\right) }%
\right\} \int_{\mathbb{R}}\left\vert g^{\prime }\left( x\right) \right\vert
f\left( x\right) dx\text{ .}
\end{eqnarray*}
Then, using Lemma \ref{prop_formulas_kernel}, we get
\begin{eqnarray}
\left\vert 
\cov\left( g,h\right) \right\vert 
\leq  \left \{ 
             \begin{array}{l}
             \sup_{x\in (a,b)}\left\{ \frac{\left\vert F\left( x\right) 
             \int_{\mathbb{R}}h dF -\int_{\mathbb{-\infty }}^{x}h dF \right\vert }{f\left( x\right) } \right\} 
             \int_{\mathbb{R}}\left\vert g^{\prime } \right\vert dF \text{}    \\
             \sup_{x\in (a,b)}\left\{ \frac{\left\vert (1-F\left( x\right)) 
             \int_{\mathbb{R}}h dF -\int_{(x,\infty)} hdF \right\vert }{f\left( x\right) } \right\} 
             \int_{\mathbb{R}}\left\vert g^{\prime } \right\vert dF
           \end{array} \right .   \label{cov_bound_th} 
\end{eqnarray}
The latter covariance inequality generalizes Menz and Otto's covariance inequality 
(\ref{MenzOtto}) for strictly log-concave measures to any measure with positive density 
on the real line. Indeed, let us write $f\left( x\right) =\exp \left( -\varphi \left( x\right) \right) $,
with $\varphi :\mathbb{R\rightarrow R}$. If  $f$ is strictly log-concave, 
and $\varphi ^{\prime }$ is absolutely continuous, so that $\varphi $
is convex and $\varphi ^{\prime \prime }>0$ on $\mathbb{R}$, then by
Lemma \ref{prop_formulas_kernel} and Corollary 2.3 of \cite{saumwellner2017efron}, we have
\begin{eqnarray*}
\lefteqn{\sup_{x\in \mathbb{R}}\left\{ \frac{\left\vert F\left( x\right) 
           \int_{\mathbb{R}}h\left( y\right) f\left( y\right) dy-\int_{\mathbb{-\infty }}^{x}h\left( y\right) 
           f\left( y\right) dy\right\vert }{f\left( x\right) } \right\} }\\
&=& \sup_{x\in \mathbb{R}}\left\{ \frac{\left\vert 
        \int K_{\mu }\left(x,y\right) h^{\prime }\left( y\right) dy\right\vert }{f\left( x\right) } \right\} \\
&=&\sup_{x\in \mathbb{R}}\left\{ \frac{\left\vert \int K_{\mu }\left(x,y\right) \frac{h^{\prime }\left( y\right) }{\varphi ^{\prime \prime}
        \left( y\right) }\varphi ^{\prime \prime }\left( y\right) dy\right\vert }{f\left( x\right) }\right\} \\
&\leq & \sup_{x\in \mathbb{R}}\left\{ \frac{\sup_{y\in \mathbb{R}}\left\vert 
          \frac{h^{\prime }\left( y\right) }{\varphi ^{\prime \prime }\left( y\right) } \right\vert 
         \int K_{\mu }\left( x,y\right) \varphi ^{\prime \prime }\left( y\right) dy}{f\left( x\right) }\right\} \\
&=& \sup_{y\in \mathbb{R}}\left\vert \frac{h^{\prime }\left( y\right) }{ \varphi ^{\prime \prime }\left( y\right) }\right\vert \text{.}
\end{eqnarray*}
Applying the latter bound in inequality (\ref{cov_bound_th}) indeed yields the asymmetric Brascamp - Lieb inequality
presented in \cite{Menz-Otto:2013}. 
Thus the covariance inequality (\ref{cov_bound_th}) 
may be viewed as a generalization of Menz and Otto's result in dimension one.
\end{rem}

\bigskip 

By setting, for $k\in \overline{\RR} $,%
\begin{equation*}
T_{k}h\left( x\right) =\mathbf{1}_{\left( a,k\right] }\left( x\right) \frac{1%
}{F\left( x\right) }\int_{\left( a,k\right] }hdF+\mathbf{1}_{\left(
k,b\right) }\left( x\right) \frac{1}{1-F\left( x\right) }\int_{\left(
k,b\right) }hdF\text{ ,}
\end{equation*}%
Inequality (\ref{ineq_cov_L1_Linfty}) of Theorem \ref{th_cov_isop copy(1)}\
yields,%
\begin{equation*}
\left\vert 
\cov%
\left( g,h\right) \right\vert \leq Is\left( \mu \right) ^{-1}\left\Vert
g^{\prime }\right\Vert _{1}\left\Vert T_{m}h_{0}\right\Vert _{\infty }\text{
.}
\end{equation*}%
It is straightforward to see that $\left\Vert T_{m}h_{0}\right\Vert _{\infty
}\leq \left\Vert h_{0}\right\Vert _{\infty }$, so that we recover inequality
(\ref{ineq_cov_L1_Poin}). 

We will say that a measure $\mu $ satisfies a $L_{1}-L_{\infty}$ covariance
inequality with constant $r>0$, if, for
every $g\in L_{\infty }\left( F\right) $ and $h\in L_{1}\left( F\right) $
with $g$ and $h$  absolutely continuous, 
\begin{equation}
\left\vert 
\cov%
_{\mu }\left( g\left( X\right) ,h\left( X\right) \right) \right\vert \leq
r\left\Vert g^{\prime }\right\Vert _{1}\left\Vert T_{m}h_{0}\right\Vert
_{\infty }\text{ .}  \label{extended_Poin}
\end{equation}%
In this case, we denote $\cov_{1,\infty}\left( \mu \right) $ the smallest 
constant achieving a $L_{1}-L_{\infty}$ covariance inequality 
for the measure $\mu$. 
In other words, if (\ref{extended_Poin}) is valid for
every $g\in L_{\infty }\left( F\right) $ and $h\in L_{1}\left( F\right) $, with $g
$ and $h$ absolutely continuous, then $\cov_{1,\infty}\left( \mu \right) \leq r$.
We have the following optimality result.

\begin{prop}
\label{theorem_eP}Let $\mu $ be a probability measure with a
positive density $f$ on $\mathbb{R}$ and cumulative distribution $F$. If $%
\mu $ satisfies a $L_{1}-L_{\infty}$-covariance inequality then its isoperimetric constant is finite and%
\begin{equation*}
\frac{1}{Is\left( \mu \right) }\leq \cov_{1,\infty}\left( \mu \right)\text{ .}
\end{equation*}%
Furthermore, if $\mu $ has a finite isoperimetric constant,  then 
$\mu $ satisfies a $L_{1}-L_{\infty}$-covariance inequality with constant $%
Is\left( \mu \right) ^{-1}$. In other words, the inverse of the isoperimetric constant $%
Is\left( \mu \right) ^{-1}$ is the optimal constant achieving inequality (%
\ref{extended_Poin}) when available,
\begin{equation*}
\frac{1}{Is\left( \mu \right) }= \cov_{1,\infty}\left( \mu \right)\text{ .}
\end{equation*}%
\end{prop}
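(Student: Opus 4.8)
The plan is to prove the two inequalities separately, since together they yield the claimed equality $\cov_{1,\infty}(\mu) = Is(\mu)^{-1}$.

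\emph{The upper bound.} Suppose $\mu$ has finite isoperimetric constant. I would apply Theorem~\ref{th_cov_isop copy(1)} directly: its right-hand side is exactly $Is(\mu)^{-1}\,\|g'\|_1$ times a maximum of two suprema, and by the definition of the operator $T_m$ that maximum is precisely $\|T_m h_0\|_\infty$, where $h_0 = h - \mathbb{E}[h]$. Indeed, for $a < x \le m$ one has $(T_m h)(x) = F(x)^{-1}\int_{(a,x]} h\, dF$, so $(T_m h_0)(x) = F(x)^{-1}\int_{(a,x]} h\, dF - \mathbb{E}[h]$, matching the first supremand; the case $m < x < b$ matches the second. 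Hence Theorem~\ref{th_cov_isop copy(1)} says exactly that $\mu$ satisfies an $L_1$-$L_\infty$ covariance inequality with constant $Is(\mu)^{-1}$, so $\cov_{1,\infty}(\mu) \le Is(\mu)^{-1}$. This half is essentially a bookkeeping translation of the already-proved theorem into the $T_m$ notation.

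\emph{The lower bound.} Suppose $\mu$ satisfies an $L_1$-$L_\infty$ covariance inequality with some constant $r$; I must show $Is(\mu)^{-1} \le r$, which by the Bobkov--Houdr\'e identity \eqref{IsoperimetricConstant} amounts to showing $f(x) \ge r^{-1}\min\{F(x), 1-F(x)\}$ for (essentially) all $x$. Fix a point $x_0 \in (a,b)$; by symmetry suppose $F(x_0) \le 1/2$, so that $m \ge x_0$ can be arranged (or, more carefully, I work near a point where $F(x_0) < 1/2$ and note $x_0 < m$, treating the median issue below). The idea is to feed into \eqref{extended_Poin} a test pair that concentrates $g'$ near $x_0$ and makes $h$ essentially an indicator. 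Concretely, take $h_0 = \mathbf{1}_{(-\infty, x_0]} - F(x_0)$, which is bounded with $\|h_0\|_\infty \le 1$; one computes $\|T_m h_0\|_\infty \le 1$ as well (on $(a,m]$, $\frac{1}{F(x)}\int_{(a,x]} h_0\, dF$ is a difference of two quantities in $[0,1]$, and similarly on $(m,b)$). For $g$ take a smooth approximation of $\mathbf{1}_{[x_0,\infty)}$ supported in a shrinking neighborhood of $x_0$, so that $\|g'\|_1 = \int |g'|\, dF \to f(x_0)$ as the neighborhood shrinks (at a Lebesgue point of $f$). On the other hand $\cov(g,h) = \mathbb{E}[g\, h_0] \to \mathbb{E}[\mathbf{1}_{[x_0,\infty)} h_0] = (1 - F(x_0)) \cdot (0 - F(x_0)) + \ldots$; more precisely $\mathbb{E}[\mathbf{1}_{[x_0,\infty)}(\mathbf{1}_{(-\infty,x_0]} - F(x_0))] = F(\{x_0\}) - (1-F(x_0))F(x_0) \to -(1-F(x_0))F(x_0) = -K_\mu(x_0,x_0)$ in the continuous case. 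Plugging into \eqref{extended_Poin} gives $F(x_0)(1-F(x_0)) \le r\, f(x_0) \cdot 1$, hence $f(x_0) \ge r^{-1} F(x_0)(1-F(x_0)) \ge r^{-1} \tfrac{1}{2}\min\{F(x_0), 1-F(x_0)\}$. This loses a factor of $2$; to recover the sharp constant I would instead choose $h_0$ supported on only one side of $x_0$ — e.g. comparing the formula~\eqref{rep_g_prime_L1-MRLa} form $F(x)\int h\, dF - \int_{-\infty}^x h\, dF = \int K_\mu(x,y) h'(y)\, dy$ — and take $h$ so that $\int K_\mu(x_0, y) h'(y)\, dy$ equals, say, $\min\{F(x_0), 1-F(x_0)\}$ while keeping $\|T_m h_0\|_\infty \le 1$; using the variant bounds in the Remark (equation~\eqref{cov_bound_th}) as a guide, the extremal $h$ is $h = \mathbf{1}_{(x_0, b)}$ when $F(x_0) \le 1/2$, for which $F(x_0)\int h\, dF - \int_{-\infty}^{x_0} h\, dF = F(x_0)(1 - F(x_0)) - 0$, and one checks $\|T_m h_0\|_\infty \le 1 - F(x_0) \le \ldots$; careful choice of the side and of whether to center by mean or not removes the spurious constant.

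\emph{Main obstacle.} The routine part is the upper bound and the construction of the smooth approximants $g$; the delicate point is getting the \emph{sharp} constant in the lower bound, i.e. ensuring the test function $h$ achieves the ratio $\int K_\mu(x_0,\cdot)h'/\big(f(x_0)\|T_m h_0\|_\infty\big)$ arbitrarily close to $Is(\mu)^{-1}$ rather than just within a universal factor. This requires choosing $h$ adapted to which side of the median $x_0$ lies on — using representation \eqref{rep_g_prime_L1-MRLa} when $x_0 \le m$ and \eqref{rep_g_prime_L1-MRLb} when $x_0 > m$ — so that the one-sided average defining $T_m h_0$ at the relevant point is exactly controlled. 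A second, more technical, obstacle is the case where $f$ has no density point with the extremal ratio attained (the essinf in \eqref{IsoperimetricConstant} is not a min): there one takes a sequence $x_n$ with $f(x_n)/\min\{F(x_n),1-F(x_n)\} \to Is(\mu)$ and runs the above construction along the sequence, and separately argues that if $Is(\mu) = 0$ then \eqref{extended_Poin} cannot hold for any finite $r$, forcing $\cov_{1,\infty}(\mu) = \infty = Is(\mu)^{-1}$ and also establishing the first assertion that a valid $L_1$-$L_\infty$ covariance inequality forces $Is(\mu) > 0$.
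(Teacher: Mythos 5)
Your first half (finite isoperimetric constant implies the $L_1$--$L_\infty$ covariance inequality with constant $Is(\mu)^{-1}$) is exactly the paper's argument: it is Theorem \ref{th_cov_isop copy(1)} re-expressed through the operator $T_m$, and there is nothing to add there.

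The lower bound is where you diverge from the paper, and as written your argument has two genuine gaps. First, admissibility: the $h$ you feed into \eqref{extended_Poin} is an indicator, hence not absolutely continuous, whereas the covariance inequality is only assumed for absolutely continuous $h\in L_1(F)$. You smooth $g$ but never $h$; you would need Lipschitz approximants $h_n$ of the indicator and, crucially, control of $\limsup_n\Vert T_m (h_n)_0\Vert_\infty$ that is \emph{uniform in} $x$ (pointwise convergence of $T_m(h_n)_0$ is not enough, and the crude bound $\Vert T_m(h_n)_0\Vert_\infty\le\Vert h_n-\mathbb{E}[h_n]\Vert_\infty$ would destroy the sharp constant you are after). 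Second, the sharp constant itself: the computation you actually complete only yields $f(x_0)\ge r^{-1}F(x_0)(1-F(x_0))$, i.e.\ $Is(\mu)^{-1}\le 2\,\cov_{1,\infty}(\mu)$, and the sharp constant is the whole content of the proposition; your one-sided fix is left as an ellipsis. It does work, but it must be carried out: with $h=\mathbf{1}_{(x_0,b)}$ and $F(x_0)\le 1/2$ one finds $T_m h_0=-(1-F(x_0))$ on $(a,x_0]$, $-F(x_0)(1-F(x))/F(x)$ on $(x_0,m]$ (of absolute value at most $1-F(x_0)$ there), and $F(x_0)$ on $(m,b)$, so $\Vert T_m h_0\Vert_\infty=1-F(x_0)$, while $\cov(\mathbf{1}_{[x_0,\infty)},h)=F(x_0)(1-F(x_0))$; the factor $1-F(x_0)$ cancels and one gets $f(x_0)\ge r^{-1}\min\{F(x_0),1-F(x_0)\}$ at every Lebesgue point, hence a.e., and then \eqref{IsoperimetricConstant} gives $Is(\mu)\ge 1/r$ directly (this also makes your closing discussion of sequences $x_n$ and of the case $Is(\mu)=0$ unnecessary). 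For comparison, the paper's proof is shorter and bypasses both issues: for an arbitrary absolutely continuous $g\in L_1(\mu)$ it approximates $\sign(g-\text{med}(g))$ by Lipschitz $h_n$ with values in $[-1,1]$ (Lemma \ref{lemma_bob_hou}), uses dominated convergence together with \eqref{formula_cov_L1} to get $\cov(g,h_n)\to\mathbb{E}\vert g-\text{med}(g)\vert$ and $\Vert T_m(h_n)_0\Vert_\infty\le 1+\vert\mathbb{E}[h_n]\vert\to 1$, and then invokes the optimality of $Is(\mu)$ in the $L_1$-Poincar\'e inequality \eqref{L1_Poin}; there the crude sup-norm bound costs nothing because $\mathbb{E}[h_n]\to 0$. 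Your local test-function route is salvageable, and has the merit of relying only on the formula \eqref{IsoperimetricConstant} rather than on the optimality statement attached to \eqref{L1_Poin}, but to count as a proof it needs the smoothing of $h$ with uniform control of $T_m(h_n)_0$ and the completed one-sided computation above.
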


\begin{proof}
The second part simply corresponds to Theorem \ref{th_cov_isop copy(1)}. For the
first part, we will use Lemma \ref{lemma_bob_hou}\ below that is due to \cite{BobHoud97}. 
Indeed, consider an absolutely continuous function $g\in
L_{1}\left( \mu \right) $. As the function sign$\left( g-\text{med}\left(
g\right) \right) =\mathbf{1}_{\left\{ g-\text{med}\left( g\right) \geq
0\right\} }-\mathbf{1}_{\left\{ g-\text{med}\left( g\right) \leq 0\right\} }$%
, we deduce from Lemma \ref{lemma_bob_hou} that there exists a sequence of
Lipschitz functions $h_{n}$ on $\mathbb{R}$ with values in $\left[ -1,1%
\right] $ such that $h_{n}\rightarrow $sign$\left( g-\text{med}\left(
g\right) \right) $ pointwise as $n\rightarrow \infty $. Hence,  the
dominated convergence theorem and identity (\ref{formula_cov_L1}) give%
\begin{equation*}
\cov%
_{\mu }\left( g\left( X\right) ,h_{n}\left( X\right) \right) \rightarrow
 \mathbb{E}\left\vert g-\text{med}\left( g\right)
\right\vert \text{  as  } n\rightarrow \infty  \text{ .}
\end{equation*}%
Furthermore, again by dominated convergence,%
\begin{eqnarray*}
\lefteqn{\max \left\{ \sup_{a<x\leq m}\left\vert \frac{\int_{-\infty }^{x}h_{n}dF}{%
F\left( x\right) }-\mathbb{E}\left[ h_{n}\right] \right\vert ,\text{ }%
\sup_{b>x>m}\left\vert \frac{\int_{x}^{+\infty }h_{n}dF}{1-F\left( x\right) }%
-\mathbb{E}\left[ h_{n}\right] \right\vert \right\} } \\
&\leq &\left\Vert h_{n}-\mathbb{E}\left[ h_{n}\right] \right\Vert _{\infty
}\leq 1+\left\vert \mathbb{E}\left[ h_{n}\right] \right\vert \rightarrow
1 \ \ \ \mbox{as} \ \ n \rightarrow \infty .
\end{eqnarray*}%
Now, the conclusion simply follows from (\ref{L1_Poin}).
\end{proof}

\begin{rem}
In dimension $d\geq 1$, if a measure $\mu $ has a finite isoperimetric
constant, Inequality (\ref{L1_Poin}) combined with H\"{o}lder's inequality implies
that for any $g\in L_{1}\left( \mu \right) $ locally Lipschitz and any $h\in L_{\infty }\left( \mu \right) $,%
\[
\left\vert 
\cov%
\left( g,h\right) \right\vert \leq \left( Is\left( \mu \right) \right)
^{-1}\left\Vert \nabla g\right\Vert _{1}\left\Vert h_{0}\right\Vert _{\infty
}\text{ .}
\]%
Now, if a measure $\mu $ satisfies for any $g\in L_{1}\left( \mu \right) $
locally Lipschitz and any $h\in L_{\infty }\left( \mu \right) $,%
\[
\left\vert 
\cov%
\left( g,h\right) \right\vert \leq r\left\Vert \nabla g\right\Vert
_{1}\left\Vert h_{0}\right\Vert _{\infty }\text{ ,}
\]
for some finite constant $r>0$, then by the same arguments as
in the proof of Proposition \ref{theorem_eP} and in particular by the use of 
Lemma \ref{lemma_bob_hou}, $\mu$ has a finite isoperimetric constant.
\end{rem}

\begin{lem}[\protect\cite{BobHoud97}, Lemma 3.5]
\label{lemma_bob_hou}For any Borel set $A\subset X$ with 
$0<\mu \left( A\right) <1$, there exists a sequence of Lipschitz functions $f_{n}$ on $%
\mathbb{R}$ with values in $\left[ 0,1\right] $ such that $f_{n}\rightarrow 
\mathbf{1}_{\text{clos}\left( A\right) }$ pointwise as $n\rightarrow \infty $,%
and 
$\lim \sup_{n\rightarrow \infty }\bb{E}\left\vert f_{n}^{\prime}\right\vert \leq \mu ^{+}\left( A\right) $.
\end{lem}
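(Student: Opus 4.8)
The plan is the classical \emph{tent-function} construction based on the distance to $A$. Write $d_A(x)=\inf_{a\in A}|x-a|$ for the distance from $x$ to $A$; this is a $1$-Lipschitz function on $\mathbb{R}$ with $\{d_A=0\}=\text{clos}(A)$ and $\{d_A<h\}=A^h$ for every $h>0$. For $h>0$ set
\[
 f_h(x)=\Bigl(1-\tfrac{1}{h}\,d_A(x)\Bigr)_+=\max\Bigl\{1-\tfrac{1}{h}\,d_A(x),\;0\Bigr\}.
\]
Then $f_h$ is Lipschitz (it is $d_A$ composed with the Lipschitz map $t\mapsto(1-t/h)_+$), takes values in $[0,1]$, equals $1$ on $\text{clos}(A)$, and vanishes outside $A^h$. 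I would first record the pointwise convergence: if $x\in\text{clos}(A)$ then $d_A(x)=0$ and $f_h(x)=1$ for every $h>0$, while if $x\notin\text{clos}(A)$ then $d_A(x)>0$ and $f_h(x)=0$ as soon as $h<d_A(x)$; hence $f_h\to\mathbf{1}_{\text{clos}(A)}$ pointwise as $h\to0^+$, and this persists along any sequence $h_n\downarrow0$.

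Next I would bound the energy $\int|f_h'|\,d\mu$. Being Lipschitz, $f_h$ is differentiable a.e.\ with $|f_h'|\le1/h$; moreover $f_h$ is constant ($=1$) on the set $\{d_A=0\}=\text{clos}(A)$ and constant ($=0$) on $\{d_A\ge h\}$, and an absolutely continuous function has vanishing derivative a.e.\ on any set on which it is constant. Hence $|f_h'|\le\tfrac{1}{h}\mathbf{1}_{A^h\setminus\text{clos}(A)}$ a.e., so that
\[
 \mathbb{E}|f_h'|=\int_{\mathbb{R}}|f_h'|\,d\mu\;\le\;\frac{\mu(A^h)-\mu(\text{clos}(A))}{h}\;\le\;\frac{\mu(A^h)-\mu(A)}{h},
\]
where the last step only uses $A\subseteq\text{clos}(A)$.

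Finally, by the very definition $\mu^+(A)=\liminf_{h\to0^+}\bigl(\mu(A^h)-\mu(A)\bigr)/h$, I would choose a sequence $h_n\downarrow0$ along which $\bigl(\mu(A^{h_n})-\mu(A)\bigr)/h_n\to\mu^+(A)$, and set $f_n:=f_{h_n}$. These functions are Lipschitz, $[0,1]$-valued, converge pointwise to $\mathbf{1}_{\text{clos}(A)}$, and satisfy $\limsup_n\mathbb{E}|f_n'|\le\limsup_n\bigl(\mu(A^{h_n})-\mu(A)\bigr)/h_n=\mu^+(A)$, which is the assertion. I do not expect a genuine obstacle here; the only delicate points are the a.e.\ vanishing of $f_h'$ on the level set $\{d_A=0\}$ — which is what localizes the energy to the shell $A^h\setminus\text{clos}(A)$ — and the harmless passage from $\mu(\text{clos}(A))$ to $\mu(A)$. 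If $\mu^+(A)=+\infty$ there is nothing to prove and any sequence $h_n\downarrow0$ works.
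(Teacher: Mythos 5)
The paper itself does not prove this lemma (it is imported verbatim from Bobkov--Houdr\'e), so there is no internal argument to compare with; your tent-function construction $f_h=(1-d_A/h)_+$ is the standard route to such approximation statements. It is correct and complete in the regime where the lemma is actually invoked in this paper, namely for $\mu$ absolutely continuous (positive density): your key localization $|f_h'|\le \tfrac1h\,\mathbf{1}_{A^h\setminus \text{clos}(A)}$ holds Lebesgue-a.e., hence $\mu$-a.e.\ when $\mu\ll\mathrm{Leb}$, and the pointwise convergence and the choice of $h_n$ realizing the $\liminf$ in $\mu^+(A)$ are exactly right.

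There is, however, a genuine gap if you want the statement in the generality in which it is phrased (arbitrary Borel $A$, arbitrary $\mu$, which is the Bobkov--Houdr\'e setting): for a merely Lipschitz $f_n$ the derivative exists only Lebesgue-a.e., so $\mathbb{E}|f_n'|$ must be read with an everywhere-defined modulus of gradient, and your one-parameter tent does not kill that gradient on $\partial A$ --- at a boundary point $x$ of $\text{clos}(A)$ the difference quotients of $f_h$ are of order $1/h$. If $\mu$ charges $\partial A$ this is fatal: take $\mu=\tfrac12\delta_0+\tfrac12\,\mathrm{Unif}[0,1]$ and $A=\{0\}$; then $\mu^+(A)=\tfrac12$ while $\int|\nabla f_h|\,d\mu\ge \tfrac{1}{2h}\to\infty$. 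The standard repair is the plateau variant $f_{\delta,h}=\min\bigl\{1,\max\{0,(h-d_A)/(h-\delta)\}\bigr\}$ with $0<\delta<h$: it equals $1$ on the open set $\{d_A<\delta\}\supseteq\text{clos}(A)$ and $0$ on the open set $\{d_A>h\}$, so its gradient modulus is bounded by $(h-\delta)^{-1}\mathbf{1}_{\{\delta\le d_A\le h\}}$ \emph{everywhere}; choosing $\delta=\varepsilon h$ and letting $h$ run through a sequence slightly inside one realizing the $\liminf$ defining $\mu^+(A)$, then $\varepsilon\to0$ with a diagonal extraction, gives $\limsup_n\mathbb{E}|\nabla f_n|\le\mu^+(A)$ for every Borel probability measure, while the pointwise limit is still $\mathbf{1}_{\text{clos}(A)}$. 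With that adjustment your argument proves the lemma in full; without it, it proves only the absolutely continuous case (which, to be fair, is all this paper uses).
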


\section{$L_{p}-L_{q}$ covariance inequalities and $L_p$-Poincar\'{e} inequalities \label{section_Lp_Cov}}
Let us begin this section by deriving the following $L_{p}-L_{q}$ 
covariance inequalities, that generalize Theorem \ref{th_cov_isop copy(1)}.

\begin{thm}
\label{theorem_lp_lq}Let $\mu $ be a probability measure with a positive
density $f$ on $\mathbb{R}$ and cumulative distribution $F$. Let $g\in L_{p}\left( F\right) $
and $h\in L_{q}\left( F\right) ,$ $p^{-1}+q^{-1}=1$, $p \in [1,+\infty)$. Assume also that $g$ and $h$ are
absolutely continuous. Then we have,%
\begin{equation}
\left\vert 
\cov%
\left( g,h \right) \right\vert \leq 
\left[ Is\left( \mu \right) \right] ^{-1}\left\Vert g^{\prime }\right\Vert
_{p}\left\Vert T_m(h_{0})\right\Vert _{q}\text{ .}  \label{ineq_cov_lpTmlq}
\end{equation}
Consequently, we also have
\begin{equation}
\left\vert 
\cov%
\left( g,h \right) \right\vert \leq p%
\left[ Is\left( \mu \right) \right] ^{-1}\left\Vert g^{\prime }\right\Vert
_{p}\left\Vert h_{0}\right\Vert _{q}\text{ .}  \label{ineq_cov_lplq}
\end{equation}
\end{thm}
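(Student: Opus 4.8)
The plan is to run the same argument as for Theorem~\ref{th_cov_isop copy(1)}, only replacing the $L_1$–$L_\infty$ Hölder splitting by the general $L_p$–$L_q$ one, and then to bound $\|T_m(h_0)\|_q$ by $\|h_0\|_q$ through a Hardy-type inequality. First I would start from the covariance representation of Lemma~\ref{prop_cov_rep}, $\cov(g,h)=\int\!\!\int g'(x)K_\mu(x,y)h'(y)\,dx\,dy$, and write
\[
|\cov(g,h)|\le\int_{\RR}|g'(x)|\,\frac{\bigl|\int_{\RR}K_\mu(x,y)h'(y)\,dy\bigr|}{f(x)}\,f(x)\,dx .
\]
Hölder's inequality with respect to $dF=f\,dx$ with exponents $p,q$ then bounds the right-hand side by $\|g'\|_p\,\bigl\|\,\bigl(\int_{\RR}K_\mu(\cdot,y)h'(y)\,dy\bigr)/f\,\bigr\|_q$.

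Second, I would identify the inner integral via Lemma~\ref{prop_formulas_kernel}. Since $f>0$ the median satisfies $F(m)=1/2$, so $\min\{F,1-F\}=F$ on $(a,m]$ and $\min\{F,1-F\}=1-F$ on $(m,b)$. Formula~(\ref{rep_g_prime_L1-MRLa}) gives, for $a<x\le m$, $\int_{\RR}K_\mu(x,y)h'(y)\,dy=-F(x)\,T_m h_0(x)$, and~(\ref{rep_g_prime_L1-MRLb}) gives, for $m<x<b$, $\int_{\RR}K_\mu(x,y)h'(y)\,dy=(1-F(x))\,T_m h_0(x)$. The characterization~(\ref{IsoperimetricConstant}) then yields $F(x)/f(x)\le Is(\mu)^{-1}$ a.e.\ on $(a,m]$ and $(1-F(x))/f(x)\le Is(\mu)^{-1}$ a.e.\ on $(m,b)$, hence the pointwise bound $\bigl|\int_{\RR}K_\mu(x,y)h'(y)\,dy\bigr|/f(x)\le Is(\mu)^{-1}|T_m h_0(x)|$ for a.e.\ $x$. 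Substituting this into the previous display proves~(\ref{ineq_cov_lpTmlq}).

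Third, to pass from~(\ref{ineq_cov_lpTmlq}) to~(\ref{ineq_cov_lplq}) it suffices to prove the Hardy-type estimate $\|T_m(h_0)\|_q\le p\,\|h_0\|_q$. I would split $\|T_m h_0\|_q^q$ over $(a,m]$ and $(m,b)$. On $(a,m]$, the change of variables $u=F(x)$ with $\varphi(v):=h_0(F^{-1}(v))$ turns $T_m h_0$ into the Hardy averaging operator $u\mapsto u^{-1}\int_0^u\varphi$ and gives $\int_{(a,m]}|T_m h_0|^q\,dF=\int_0^{F(m)}\bigl|u^{-1}\int_0^u\varphi\bigr|^q\,du$; the classical Hardy inequality (applied to $\varphi\mathbf{1}_{(0,F(m))}$) bounds this by $p^q\int_0^{F(m)}|\varphi|^q\,dv=p^q\int_{(a,m]}|h_0|^q\,dF$, because $p=q/(q-1)$. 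The symmetric substitution $u=1-F(x)$ handles $(m,b)$. Adding the two halves gives $\|T_m h_0\|_q^q\le p^q\|h_0\|_q^q$ for $q<\infty$; the endpoint $q=\infty$ (i.e.\ $p=1$) is just the contraction bound $\|T_m h_0\|_\infty\le\|h_0\|_\infty$ already used in Theorem~\ref{th_cov_isop copy(1)}. Combining with~(\ref{ineq_cov_lpTmlq}) completes the proof.

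The delicate point is the Hardy step with the \emph{sharp} constant $p=q/(q-1)$: one must make the change of variables rigorous (here $F$ is a homeomorphism of $(a,b)$ onto $(0,1)$ since $f>0$, but one should still check the absolute-continuity and integrability hypotheses so that Fubini is legitimate in Lemma~\ref{prop_cov_rep} and so that $T_m h_0\in L_q(F)$), and one must confirm that the operator norm of the relevant weighted Hardy operator is exactly $p$ rather than something larger. This is precisely the ``generalized weighted Hardy-type inequality'' the paper singles out; I would state and prove it as an auxiliary lemma and then invoke it here, the rest of the argument being the routine manipulations above.
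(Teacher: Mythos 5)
Your proposal is correct, and its first half is the paper's own argument: the covariance representation of Lemma~\ref{prop_cov_rep}, H\"older with exponents $p,q$ against $dF$, the identification of $\int K_\mu(x,y)h'(y)\,dy$ via Lemma~\ref{prop_formulas_kernel} (your pointwise identities $-F(x)T_mh_0(x)$ on $(a,m]$ and $(1-F(x))T_mh_0(x)$ on $(m,b)$ are exactly what the paper's chain of displays computes, and you read $T_m$ correctly despite the typo in its displayed definition), and the bound $\min\{F,1-F\}/f\le Is(\mu)^{-1}$ from (\ref{IsoperimetricConstant}), giving (\ref{ineq_cov_lpTmlq}). Where you genuinely diverge is the Hardy step $\|T_m h_0\|_q\le \frac{q}{q-1}\|h_0\|_q=p\|h_0\|_q$. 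The paper proves this as a standalone result (Theorem~\ref{theorem_hardy}) for an \emph{arbitrary continuous} $F$ and an arbitrary split point $k$, by a direct weighted H\"older--Fubini argument with the auxiliary weights $F_{x,s}$ and the sharp choice $s=(p-1)/p^2$; you instead transport to the uniform distribution via $u=F(x)$ (legitimate here since $f>0$ makes $F$ a strictly increasing homeomorphism of $(a,b)$ onto $(0,1)$, so $F(X)\sim\mathrm{Unif}(0,1)$) and invoke the classical sharp Hardy inequality on each of $(0,\tfrac12)$ and $(\tfrac12,1)$. Your route is more elementary and fully sufficient under the hypotheses of Theorem~\ref{theorem_lp_lq} (which assume a positive density), and it delivers the same constant $p$; what the paper's route buys is the extra generality of Theorem~\ref{theorem_hardy} (no density required, general $k$), which the authors single out as of independent interest. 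The endpoint case $p=1$, $q=\infty$ is handled in both treatments by Theorem~\ref{th_cov_isop copy(1)} together with $\|T_mh_0\|_\infty\le\|h_0\|_\infty$, as you note.
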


Before proving Theorem \ref{theorem_lp_lq}, we note that Inequality 
(\ref{ineq_cov_lplq}) is a consequence of Inequality (\ref{ineq_cov_lpTmlq}) 
applied together with the following weighted Hardy type inequality, that is of independent interest.
\begin{thm}[Generalized Hardy Inequality]
\label{theorem_hardy}
Let $F$ be a continuous distribution on $\mathbb{R}$, with $a=\inf \left\{ x:F\left( x\right) >0\right\} $ and $b=\sup \left\{
x:F\left( x\right) <1\right\} .$ For a function $h\in
L_{p}\left( F\right) ,$ $1< p < \infty$ and $k \in \overline{\RR}$, 
\begin{equation}
\label{ineq_Hardy_Tm}
\| T_k(h) \|_{p}^p =\int_{a<x\leq k}\left\vert \frac{\int_{-\infty }^{x}hdF}{F\left( x\right) }%
\right\vert ^{p}dF(x)+\int_{b>x\geq k}\left\vert \frac{\int_{x}^{\infty }hdF}{%
1-F\left( x\right) }\right\vert ^{p}dF(x)\leq \left( \frac{p}{p-1}\right)
^{p}\left\Vert h\right\Vert _{p}^{p}\text{ .}
\end{equation}
In particular, 
\begin{eqnarray*}
\int_{\RR} \bigg | \frac{1}{F(x)} \int_{(-\infty,x]} h(y) dF(y) \bigg |^p dF(x) 
\le  \left ( \frac{p}{p-1} \right )^p \int_{\RR} | h(x)|^p dF(x) 
\end{eqnarray*}
and 
\begin{eqnarray*}
&& \int_{\RR} \bigg | \frac{1}{1-F(x)} \int_{(x,\infty)} h(y) dF(y) \bigg |^p dF(x) 
      \le  \left ( \frac{p}{p-1} \right )^p \int_{\RR} | h(x)|^p dF(x) .
\end{eqnarray*}
\end{thm}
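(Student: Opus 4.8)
The plan is to recognize \eqref{ineq_Hardy_Tm} as two classical Hardy inequalities in disguise and to reduce it to them by a monotone change of variable. Consider first the left-hand piece, the integral over $\{a<x\le k\}$ of $|F(x)^{-1}\int_{(-\infty,x]}h\,dF|^p\,dF(x)$. Since $F$ is continuous, push forward by $u=F(x)$: writing $\tilde h(u)=h(F^{-1}(u))$ on $(0,1)$, the quantity $\int_{(-\infty,x]}h\,dF$ becomes $\int_0^{u}\tilde h(v)\,dv$, and $dF(x)$ becomes $du$. So the first term equals $\int_0^{F(k)}\big|\tfrac1u\int_0^u\tilde h(v)\,dv\big|^p\,du$, which is bounded by $\int_0^1\big|\tfrac1u\int_0^u\tilde h\big|^p\,du$. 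This is exactly the setting of Hardy's inequality on $(0,1)$ (equivalently on $(0,\infty)$ with $\tilde h$ extended by $0$), which gives the bound $\big(\tfrac{p}{p-1}\big)^p\int_0^1|\tilde h(v)|^p\,dv=\big(\tfrac{p}{p-1}\big)^p\|h\|_p^p$. The second term, over $\{b>x\ge k\}$, is handled symmetrically: the change of variable turns $(1-F(x))^{-1}\int_{(x,\infty)}h\,dF$ into $\tfrac1{1-u}\int_u^1\tilde h(v)\,dv$, and one invokes the dual (right-tail) form of Hardy's inequality on $(0,1)$ with the same constant.

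The one genuine subtlety is that the two pieces share the same weight and the same function $h$, so one cannot simply add two sharp Hardy bounds and expect the constant $\big(\tfrac{p}{p-1}\big)^p$ to survive for the sum $\|T_k(h)\|_p^p$ — naively that would give $2\big(\tfrac{p}{p-1}\big)^p$. The point is that, after the change of variables, $\|T_k(h)\|_p^p=\int_0^{t}\big|\tfrac1u\int_0^u\tilde h\big|^p du+\int_t^{1}\big|\tfrac1{1-u}\int_u^1\tilde h\big|^p du$ with $t=F(k)$, and we apply the \emph{left} Hardy inequality only on $(0,t)$ and the \emph{right} Hardy inequality only on $(t,1)$. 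Crucially, Hardy's inequality on an interval $(0,t)$ bounds $\int_0^t|\tfrac1u\int_0^u\tilde h|^p du$ by $\big(\tfrac{p}{p-1}\big)^p\int_0^t|\tilde h|^p du$ — the constant does not depend on the interval, and the right-hand side only sees $\tilde h$ on $(0,t)$. Likewise the right Hardy inequality on $(t,1)$ produces $\big(\tfrac{p}{p-1}\big)^p\int_t^1|\tilde h|^p du$. Adding these two disjoint contributions gives $\big(\tfrac{p}{p-1}\big)^p\int_0^1|\tilde h|^p du=\big(\tfrac{p}{p-1}\big)^p\|h\|_p^p$, which is the claimed bound. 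So the main obstacle is purely bookkeeping: making sure the splitting point $k$ (equivalently $t=F(k)$) is used consistently so that the two Hardy applications act on complementary subintervals and the $L_p$ masses add rather than double-count.

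For the "in particular" statements: taking $k=b$ (so $t=1$) in \eqref{ineq_Hardy_Tm} kills the second term and leaves $\int_{\RR}|F(x)^{-1}\int_{(-\infty,x]}h\,dF|^p\,dF(x)\le\big(\tfrac{p}{p-1}\big)^p\|h\|_p^p$; taking $k=a$ (so $t=0$) kills the first term and leaves the right-tail inequality. I would state the two underlying one-dimensional Hardy inequalities explicitly — for $\tilde h\in L_p(0,\infty)$, $\int_0^\infty\big|\tfrac1u\int_0^u\tilde h\big|^p du\le\big(\tfrac{p}{p-1}\big)^p\int_0^\infty|\tilde h|^p du$, and its dual form — with a reference, then note that the finite-interval versions follow by extending $\tilde h$ by zero outside the interval. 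The only place where $p>1$ is essential is here: the Hardy constant $\tfrac{p}{p-1}$ blows up as $p\downarrow1$, which is why the hypothesis is $1<p<\infty$ rather than $p\ge1$.
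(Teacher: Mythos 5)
Your proof is correct, but it takes a genuinely different route from the paper. You reduce the statement to the classical Hardy inequality via the quantile transform: push forward by $u=F(x)$ (using that $F$ is continuous, so $dF$ pushes to Lebesgue measure on $(0,1)$, $\{F^{-1}(v)\le x\}=\{v\le F(x)\}$, and $h\circ F^{-1}\circ F=h$ $\mu$-a.e.), turn the two tail averages into $\tfrac1u\int_0^u\tilde h$ and $\tfrac1{1-u}\int_u^1\tilde h$, and apply the left and reflected Hardy inequalities on the complementary subintervals $(0,t)$ and $(t,1)$, $t=F(k)$, so that the $L_p$ masses add and the constant $\bigl(\tfrac{p}{p-1}\bigr)^p$ is not doubled --- your extension-by-zero argument for the restricted Hardy inequalities is the right way to justify this, and it is the one point where a naive argument would lose a factor $2$. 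The paper instead gives a self-contained proof working directly with $F$: it writes $T_kh(x)=\int h\,\delta_x\,dF$ for an explicit kernel of normalized indicators, applies H\"older's inequality with an auxiliary power weight $F_{x,s}$ built from $F^s$ and $(1-F)^s$, uses Fubini, computes the resulting inner integral in closed form (the complementary indicators $\mathbf{1}_{[t\le k]}$ and $\mathbf{1}_{[t>k]}$ playing the same role as your disjoint subintervals), and chooses $s=(p-1)/p^2$ to land on $\bigl(\tfrac{p}{p-1}\bigr)^p$ --- in effect re-running the classical power-weight proof of Hardy's inequality in the $F$-weighted, two-tail setting. Your approach is shorter and makes the constant an immediate inheritance from the classical theorem, at the price of the (standard but nontrivial) measure-theoretic bookkeeping of the probability integral transform, which you should spell out; the paper's approach avoids any change of variables or external citation and exhibits explicitly where the constant comes from. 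Your treatment of the two ``in particular'' statements (taking $k=b$ resp.\ $k=a$, i.e.\ $k=\pm\infty$) matches the intended use of the theorem.
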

The proof of Theorem \ref{theorem_hardy} can be found below. 
The fact that Hardy-type inequalities naturally come into play here is appealing, 
since it is well-known from the work of \cite{MR1710983} and \cite{MR1682772} - see also \cite{MR1845806}, 
Chapter 6 and \cite{MR2146071}, Chapters 4 and 5 - that Hardy-type inequalities 
can be used to have access to sharp constants in Poincar\'{e} and log-Sobolev inequalities 
on the real line and also in the discrete setting.\\
It is also worth noting that Inequality (\ref{ineq_cov_lplq}) of Theorem \ref{theorem_lp_lq} 
induces the celebrated Cheeger's inequality as a corollary and thus in particular 
gives a new proof of it, avoiding the classical use of the co-area formula. 

\begin{cor}[Cheeger's inequality]
\label{corollary_cheeger}
Let $\mu $ be a probability measure with a positive density $f$ on $\RR$ and
cumulative distribution $F$. Let $g\in L_{2}\left( F\right) $, absolutely continuous. Then we
have,%
\begin{equation}
\var%
\left( g \right) \leq 4\left[ Is\left( \mu \right) %
\right] ^{-2}\left\Vert g^{\prime }\right\Vert _{2}^{2}\text{ .}
\end{equation}%
Consequently, if  $\lambda _{1}$ denotes the best constant in the Poincar\'{e}
inequality, it follows that%
\begin{equation*}
\left( Is\left( \mu \right) \right) ^{2}/4\leq \lambda _{1}\text{ .}
\end{equation*}
\end{cor}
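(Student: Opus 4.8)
The plan is to specialize the $L_p$--$L_q$ covariance inequality (\ref{ineq_cov_lplq}) of Theorem \ref{theorem_lp_lq} to $p=q=2$ and to the diagonal choice $h=g$. First I would note that with this choice $\cov(g,g)=\var(g)$, that $g_0:=g-\E[g]$ satisfies $\|g_0\|_2^2=\var(g)$, and that the hypothesis ``$g\in L_2(F)$, absolutely continuous'' is precisely what is needed to apply (\ref{ineq_cov_lplq}) with $p=2$. Substituting then gives
\[
\var(g)=|\cov(g,g)|\le 2\,[Is(\mu)]^{-1}\,\|g'\|_2\,\|g_0\|_2=2\,[Is(\mu)]^{-1}\,\|g'\|_2\,\sqrt{\var(g)}\,.
\]

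Next I would divide through by $\sqrt{\var(g)}$ and square. If $\var(g)=0$, or if $\|g'\|_2=+\infty$, the claimed inequality is trivial; if $\var(g)=+\infty$ then $g\notin L_2(F)$, contradicting the hypothesis, so one may assume $0<\var(g)<\infty$. Dividing then yields $\sqrt{\var(g)}\le 2\,[Is(\mu)]^{-1}\,\|g'\|_2$, and squaring gives $\var(g)\le 4\,[Is(\mu)]^{-2}\,\|g'\|_2^2$, which is the first assertion. For the statement on the optimal Poincar\'e constant $\lambda_1$, I would recall that $\lambda_1$ is by definition the largest $\lambda>0$ such that $\lambda\,\var(g)\le\int|\nabla g|^2\,d\mu=\|g'\|_2^2$ for every locally Lipschitz $g$; the inequality just established rewrites as $\bigl(Is(\mu)\bigr)^2/4\cdot\var(g)\le\|g'\|_2^2$ for all such $g$, so $\bigl(Is(\mu)\bigr)^2/4$ is an admissible value of $\lambda$, whence $\bigl(Is(\mu)\bigr)^2/4\le\lambda_1$.

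I do not anticipate any real obstacle here: the corollary is a one-line consequence of (\ref{ineq_cov_lplq}) once the self-improvement trick ($h=g$, then divide by $\sqrt{\var(g)}$) is applied. The only points deserving a word of care are the degenerate cases $\var(g)\in\{0,+\infty\}$ and $\|g'\|_2=+\infty$ (all immediate), and the observation that this derivation never invokes the co-area formula, which is the feature emphasized in the statement.
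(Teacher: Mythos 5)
Your argument is exactly the paper's: the authors prove the corollary by taking $h=g$ and $p=2$ in Inequality (\ref{ineq_cov_lplq}) and performing the same self-improvement step you describe, so your proposal is correct and follows the same route, merely spelling out the division by $\sqrt{\var(g)}$, the degenerate cases, and the passage to $\lambda_1$ that the paper leaves implicit.
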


\begin{proof} Simply take $g=h$ and $p=2$ in Inequality (\ref{ineq_cov_lplq}).
\end{proof}

\begin{proof}[Proof of Theorem \protect\ref{theorem_lp_lq}]
The case where $p=1$ and $q=+\infty$ is given by Theorem \ref{th_cov_isop copy(1)}. 
Let us assume that $p,q \in (1,+\infty)$. We have%
\begin{eqnarray*}
\lefteqn{\left\vert 
\cov%
( g ,h) \right\vert } \\
&\leq &\int_{(a,b)}\left\vert g^{\prime }\left( x\right) \right\vert 
\frac{\left\vert \int_{\mathbb{R}}K_{\mu }\left( x,y\right) h^{\prime}\left(
y\right) dy\right\vert } {f\left( x\right) }f\left( x\right) dx \\
&=& \int_{(a,b)}\left\vert g^{\prime }\left( x\right) \right\vert
\left( \frac{\left\vert F\left( x\right) \int_{\mathbb{R}}hdF -\int_{-\infty
}^{x}hdF\right\vert }{f\left( x\right) } 1_{x\leq m} +\frac{\left\vert
\left( 1-F\left( x\right) \right) \int_{\mathbb{R}}hdF-\int_{x}^{\infty
}hdF\right\vert } {f\left( x\right) }1_{x \ge m}\right) f\left( x\right) dx
\\
&\leq &\left\Vert g^{\prime }\right\Vert _{p}\left( \int_{a<x\leq m} \frac{%
\left\vert F\left( x\right) \int_{\mathbb{R}}hdF
-\int_{-\infty}^{x}hdF\right\vert ^{q}} {f^{q}\left( x\right) }dF
+\int_{b>x\geq m} \frac{\left\vert \left( 1-F\left( x\right) \right) \int_{%
\mathbb{R} }hdF -\int_{x}^{\infty }hdF\right\vert ^{q}} {f^{q}\left(
x\right) }dF\right)^{1/q} \\
&=&\left\Vert g^{\prime }\right\Vert _{p}\left( \int_{a<x\leq m} \frac{%
\left\vert \int_{\mathbb{R}}hdF-\frac{\int_{-\infty }^{x}hdF} {%
F\left(x\right) }\right\vert ^{q}}{f^{q}\left( x\right) /F^{q}\left(
x\right) } dF + \int_{b>x\geq m}\frac{\left\vert \int_{\mathbb{R}}hdF-\frac{%
\int_{x}^{\infty }hdF}{1-F\left( x\right) }\right\vert ^{q}} {%
f^{q}\left(x\right) /\left( 1-F\left( x\right) \right) ^{q}}dF\right) ^{1/q}
\\
&\leq & Is\left( \mu \right) ^{-1}\left\Vert g^{\prime }\right\Vert _{p}
\left\Vert T_m (h_{0})\right\Vert _{q}\text{ .}
\end{eqnarray*}
Hence, inequality (\ref{ineq_cov_lpTmlq}) is proved. 
To prove inequality (\ref{ineq_cov_lplq}), simply combine inequality (\ref{ineq_cov_lpTmlq}) 
 with inequality (\ref{ineq_Hardy_Tm}) of Theorem \ref{theorem_hardy}.
\end{proof}

\begin{proof}[Proof of Theorem \ref{theorem_hardy}]
It suffices to prove the inequalities for $h\geq 0$. From now on, we assume
that $h\geq 0$. To prove Inequality (\ref{ineq_Hardy_Tm})  we first define the functions 
$\delta _{x}$ and $F_{x,s}$:  for $x\in (a,b)$, 
\begin{equation*}
\delta _{x}\left( t\right) := 
1_{[x,\infty)}(k)\frac{1_{(-\infty ,x]}(t)}{F(x)}  + 
    1_{(-\infty,x)}(k)\frac{1_{(x,\infty )}(t)}{1-F(x)},\ \ \mbox{for}\ \ t\in \mathbb{R}, 
\end{equation*}
and, for $t\in \mathbb{R}$, $s>0$,
\begin{equation*}
F_{x,s}\left( t\right) 
:=  F^{s}\left( t\right)   1_{[x,\infty)}(k) 1_{(-\infty ,x]}(t)
     \ +\ (1-F(t))^{s} 1_{(-\infty,x)}(k) 1_{(x,\infty )}(t) .
\end{equation*}
It follows that 
\begin{eqnarray*}
\| T_k h \|_{p}^p 
& = & \int_{a<x\leq k}\left( \frac{\int_{-\infty }^{x}hdF}{F\left( x\right) }\right)^{p}dF(x)
    +\int_{b>x > k}\left( \frac{\int_{x}^{\infty }hdF}{1-F\left( x\right) }\right) ^{p}dF(x)\\
& =  & \int_{x\in \mathbb{R}}\left( \int_{t\in \mathbb{R}}h\left( t\right) \delta _{x}
     \left( t\right) dF\left( t\right) \right)^{p}dF\left( x\right) \text{ .}
\end{eqnarray*}
Now, by H\"{o}lder's inequality,
\begin{eqnarray*}
\lefteqn{\left( \int_{t\in \mathbb{R}}h\left( t\right) \delta _{x}\left(
t\right) dF\left( t\right) \right) ^{p}} \\
&\leq &\int_{t\in \mathbb{R}}\left[ h\left( t\right) \delta _{x}\left(
t\right) F_{x,s}\left( t\right) \right] ^{p}dF\left( t\right) \left(
\int_{u\in \mathbb{R}}\left[ F_{x,s}\left( u\right) \right]^{-q}dF\left( u\right) \right) ^{p/q}\text{ .}
\end{eqnarray*}
The use of Fubini's theorem then gives
\begin{eqnarray*}
\lefteqn{\int_{x\in \mathbb{R}}\left( \int_{t\in \mathbb{R}}h\left( t\right)
              \delta _{x}\left( t\right) dF\left( t\right) \right) ^{p}dF\left( x\right) } \\
&\leq &  \int_{x\in \mathbb{R}}\int_{t\in \mathbb{R}}\left[ h\left( t\right) \delta _{x}\left( t\right) F_{x,s}\left( t\right) \right] ^{p}dF(t)
              \left( \int_{u\in \mathbb{R}}\left[ F_{x,s}\left( u\right) \right] ^{-q}dF\left( u\right) \right) ^{p/q}dF\left( x\right)  \\
& = &\int_{t\in \mathbb{R}}h^{p}\left( t\right) \left( \int_{x\in \mathbb{R}} \left[ \delta _{x}\left( t\right) F_{x,s}\left( t\right) \right]^{p}
           \left( \int_{u\in \mathbb{R}}\left[ F_{x,s}\left( u\right) \right]^{-q}dF\left( u\right) \right) ^{p/q}dF\left( x\right) \right) 
           dF\left( t\right) \text{ .}
\end{eqnarray*}
In order to conclude, it suffices to prove that for an appropriate choice of $s>0$,
\begin{equation*}
\int_{x\in \mathbb{R}}\left[ \delta _{x}\left( t\right) F_{x,s}\left(
t\right) \right] ^{p}\left( \int_{u\in \mathbb{R}}\left[ F_{x,s}\left(u\right) \right] ^{-q}dF\left( u\right) \right) ^{p/q}dF\left( x\right) 
\leq \left( \frac{p}{p-1}\right) ^{p}\text{ .}
\end{equation*}
But we find that 
\begin{eqnarray*}
\lefteqn{\int_{x\in \mathbb{R}}\left[ \delta _{x}\left( t\right) F_{x,s}\left( t\right) \right] ^{p}
    \left( \int_{u\in \mathbb{R}}\left[ F_{x,s}\left( u\right) \right] ^{-q}dF\left( u\right) \right) ^{p/q}dF\left( x\right) }  \nonumber \\
& = & \int_{x\in \mathbb{R}} \left ( 1_{[x, \infty)]} (k) \frac{1_{(-\infty, x]} (t)}{F(x)} 
            +  1_{(-\infty,x)} (k) \frac{1_{(x,\infty)}(t)}{1-F(x)} \right )^p  \nonumber \\
&&  \ \ \cdot \left ( F^s (t) 1_{[x,\infty)} (k) 1_{(-\infty, x]} (t)  + 
             (1-F(t))^s 1_{(-\infty,x)} (k) 1_{(x, \infty )} (t) \right )^p \nonumber  \\
&& \ \ \cdot \left( \int_{u\in \mathbb{R}}\left[ F_{x,s}\left( u\right) \right] ^{-q} dF\left( u\right) \right) ^{p/q} dF\left( x\right) \nonumber \\
& = &  \int_{x \in \mathbb{R}} \left (1_{[x,\infty)} (k) \frac{1_{(-\infty, x]} (t)}{F^p(x)}  F^{sp} (t)  
              +   1_{(-\infty,x)} (k) \frac{1_{(x,\infty)} (t)}{(1-F(x))^p} (1-F(t))^{sp} \right )  \nonumber  \\
&& \ \ \cdot  \left( \int_{u\in \mathbb{R}}\left[ F_{x,s}\left( u\right) \right] ^{-q} dF\left( u\right) \right) ^{p/q} dF\left( x\right)  \nonumber\\ 
& = &  \int_{x \in \mathbb{R}} 1_{[x,\infty)} (k) \frac{1_{(-\infty, x]} (t)}{F^p(x)}  F^{sp} (t) 
             \cdot  \left( \int_{u\in \mathbb{R}}\left[ F_{x,s}\left( u\right) \right] ^{-q} dF\left( u\right) \right) ^{p/q} dF\left( x\right) \nonumber \\ 
&&  \ \ + \   \int_{x \in \mathbb{R}} 
              1_{(-\infty,x)} (k) \frac{1_{(x,\infty)} (t)}{(1-F(x))^p} (1-F(t))^{sp} 
               \cdot  \left( \int_{u\in \mathbb{R}}\left[ F_{x,s}\left( u\right) \right] ^{-q} dF\left( u\right) \right) ^{p/q} dF\left( x\right) \nonumber \\ 
& = & \mathbf{1}_{[t\leq k]} F(t)^{sp} \int_{x=t}^k F (x)^{-p} \left ( \int_{u = - \infty}^x  
                 F(u)^{-sq} dF(u) \right )^{p/q}  dF(x)   \nonumber \\
&& \ \ + \ \mathbf{1}_{[t>k]} (1-F(t))^{sp} \int_{x=k}^t  (1-F(x))^{-p}  \left ( \int_{u = x}^\infty (1-F(u))^{-sq} dF(u) 
                \right )^{p/q} dF(x) \nonumber \\
&=&\mathbf{1}_{[t\leq k]}\left( 1-sq\right) ^{-p/q}F^{sp}\left( t \right)
        \int_{x=t}^{k}F\left( x\right) ^{\frac{p}{q}-sp-p}dF\left( x\right)  \\
& &\ +\ \mathbf{1}_{[t>k]}\left( 1-sq\right) ^{-p/q}\left( 1-F\left( t\right) \right)^{sp}
       \int_{x=k}^{t}\left( 1-F\left( x\right) \right) ^{\frac{p}{q} -sp-p}dF\left( x\right)  \\
& = &\mathbf{1}_{[t\leq k]}\frac{\left( 1-sq\right) ^{-p/q}}{1+p/q-sp-p}
        F^{sp}\left( t\right) \left[ F(k)^{1+p/q-sp-p}-F\left( t\right) ^{1+p/q-sp-p}\right]  \\
&&\ +\ \mathbf{1}_{[t>k]}\frac{\left( 1-sq\right) ^{-p/q}}{1+p/q-sp-p}\left(1-F\left( t\right) \right) ^{sp}
      \left[ (1-F(k))^{1+p/q-sp-p}-\left( 1-F\left( t\right) \right) ^{1+p/q-sp-p}\right]  \\
& = & \mathbf{1}_{[t\leq k]}\frac{\left( 1-sq\right) ^{-p/q}}{sp}
      \left[ 1 - \frac{F^{sp}(t)}{F(k)^{sp}}\right]  \\
&&\ +\ \mathbf{1}_{[t>k]}\frac{\left( 1-sq\right) ^{-p/q}}{sp}
      \left[1 -\frac{(1-F(t))^{sp}}{(1-F(k))^{sp}}\right]   \ \ \ \ \mbox{since} \ 1 + p/q - p = 0\\
&=&\mathbf{1}_{[t\leq k]}\frac{\left( 1-sq\right) ^{-p/q}}{sp}\left( t\right) 
        \left[ 1-\frac{F^{sp}(t)}{F(k)^{sp}}\right]  \\
&&\ +\ \mathbf{1}_{[t>k]}\frac{\left( 1-sq\right) ^{-p/q}}{sp}\left[1 -\frac{(1-F(t))^{sp}}{(1-F(k))^{sp}}\right]  
       \leq \frac{\left( 1-sq\right) ^{-p/q}}{sp}
\end{eqnarray*}

Choosing $s=\left( p-1\right) /p^{2}$  
\begin{equation*}
1-sq=sp=\frac{p-1}{p}>0\ \ ,\ \ 
\end{equation*}
the conclusion for $T_k$ follows.
\end{proof}

$L_{p}$-Poincar\'{e} inequalities are an essential tool of functional analysis in relation 
with the concentration of measure phenomenon. 
In particular, the pathbreaking work of 
\cite{MR2507637} shows that under 
convexity assumptions that include the log-concave setting, the exponential 
concentration property is equivalent to any $L_{p}$-Poincar\'{e} inequality for $p \in [1,+\infty]$.
Theorem \ref{theorem_lp_lq} induces the following sharp $L_{p}$-Poincar\'{e}
inequalities.

\begin{thm}
\label{Theorem_Lp_Poincare_ineq}Let $\mu $ be a probability measure of finite isoperimetric constant with a
positive density $f$ on $\mathbb{R}$ and cumulative distribution $F$. Let $p \in [1,+\infty)$ and $u\in L_{p}\left(
F\right) $ be absolutely continuous. Then we have,%
\begin{equation}
\left\Vert u-\mathbb{E}\left[ u\right] \right\Vert _{p}\leq 2p\left[ Is\left(
\mu \right) \right] ^{-1}\left\Vert u^{\prime }\right\Vert _{p}\text{ .}
\label{Cheeger_Lp_2}
\end{equation}%
If in addition $\mathbb{E}\left[\sign \left( \left( u-\mathbb{E} \left[ u\right]
\right) \right)\left\vert u-\mathbb{E} \left[ u\right]\right\vert^{p-1} \right]=0$ 
(for instance if $p$ is an even integer and $ \mathbb{E}\left[(u-\mathbb{E} \left[ u\right])^{p-1} \right] = 0$), then
\begin{equation}
\left\Vert u-\mathbb{E}\left[ u\right] \right\Vert _{p}\leq p\left[ Is\left(
\mu \right) \right] ^{-1}\left\Vert u^{\prime }\right\Vert _{p}\text{ .}
\label{Cheeger_Lp_1}
\end{equation}
Furthermore, if $p$ is an odd integer and $\mathbb{E} \left[ u^{p}\right] =0$, then%
\begin{equation}
\left\Vert u\right\Vert _{p}\leq 2p\left[ Is\left( \mu \right) \right]
^{-1}\left\Vert u^{\prime }\right\Vert _{p}\text{ .}
\label{Lp_Poin_mean_0_2}
\end{equation}
Finally, when in addition $\mathbb{E}\left[\sign(u^{p}) \right]=0$, we get 
\begin{equation}
\left\Vert u\right\Vert _{p}\leq p\left[ Is\left( \mu \right) \right]
^{-1}\left\Vert u^{\prime }\right\Vert _{p}\text{ .}
\label{Lp_Poin_mean_0_1}
\end{equation}
\end{thm}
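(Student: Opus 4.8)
The plan is to derive all four bounds from the $L_p$--$L_q$ covariance inequality $(\ref{ineq_cov_lplq})$, which in the notation $h_0:=h-\mathbb{E}[h]$, $q:=p/(p-1)$ reads $|\cov(g,h)|\le p\,[Is(\mu)]^{-1}\,\|g'\|_p\,\|h_0\|_q$, by feeding it the classical dual test function of the $L_p$-norm under consideration and reading off the constant from a bound on $\|h_0\|_q$. The one elementary fact I shall use throughout is that for $w\in L_p(F)$ the function $h:=\sign(w)|w|^{p-1}$ satisfies $\mathbb{E}[w\,h]=\|w\|_p^p$ and $\|h\|_q=\|w\|_p^{\,p-1}$ (the equality case of H\"{o}lder), together with $|\mathbb{E}[h]|\le\mathbb{E}|h|\le\|h\|_q$. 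Throughout we may assume $u$ is not $F$-a.e.\ constant, so that the norms below and the quantity $\mathbb{E}[|u|^{p-1}]$ are positive.

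For $(\ref{Cheeger_Lp_2})$ I would take $g=u$, put $v:=u-\mathbb{E}[u]$, and choose $h:=\sign(v)|v|^{p-1}$. Since $\mathbb{E}[v]=0$, we get $\cov(u,h)=\cov(v,h)=\mathbb{E}[vh]=\|v\|_p^p$, while $\|h_0\|_q\le\|h\|_q+|\mathbb{E}[h]|\le 2\|h\|_q=2\|v\|_p^{\,p-1}$; substituting into $(\ref{ineq_cov_lplq})$ and dividing by $\|v\|_p^{\,p-1}$ yields $(\ref{Cheeger_Lp_2})$. The computation for $(\ref{Cheeger_Lp_1})$ is identical, except that its extra hypothesis is precisely $\mathbb{E}[\sign(v)|v|^{p-1}]=\mathbb{E}[h]=0$, so that $h_0=h$, $\|h_0\|_q=\|v\|_p^{\,p-1}$, and the factor $2$ disappears.

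The uncentered bounds $(\ref{Lp_Poin_mean_0_2})$--$(\ref{Lp_Poin_mean_0_1})$ are where the hypothesis that $p$ is an odd integer enters: for $p$ odd one has $u^p=u\,|u|^{p-1}$ pointwise, so $\mathbb{E}[u^p]=0$ is the same as $\mathbb{E}[u\,|u|^{p-1}]=0$. Consequently, for any constant $\alpha$ the test function $h_\alpha:=(\sign(u)-\alpha)|u|^{p-1}$ still obeys $\mathbb{E}[u\,h_\alpha]=\mathbb{E}[|u|^p]-\alpha\,\mathbb{E}[u\,|u|^{p-1}]=\|u\|_p^p$. I would then pick $\alpha:=\mathbb{E}[\sign(u)|u|^{p-1}]/\mathbb{E}[|u|^{p-1}]$, which makes $\mathbb{E}[h_\alpha]=0$ (so that $(h_\alpha)_0=h_\alpha$) and satisfies $|\alpha|\le 1$, forcing $|\sign(u)-\alpha|\le 2$ pointwise and hence $\|h_\alpha\|_q\le 2\,\| |u|^{p-1}\|_q=2\|u\|_p^{\,p-1}$. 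Applying $(\ref{ineq_cov_lplq})$ with $g=u$ and $h=h_\alpha$, using $\cov(u,h_\alpha)=\mathbb{E}[u\,h_\alpha]=\|u\|_p^p$, and dividing by $\|u\|_p^{\,p-1}$ gives $(\ref{Lp_Poin_mean_0_2})$. Under the additional assumption $\mathbb{E}[\sign(u^p)]=0$ one has $\alpha=0$, so $h_\alpha=\sign(u)|u|^{p-1}$ with $\|h_\alpha\|_q=\|u\|_p^{\,p-1}$ exactly, which removes the factor $2$ and yields $(\ref{Lp_Poin_mean_0_1})$.

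It remains to address two routine points and the main obstacle. The covariance identity behind $(\ref{ineq_cov_lplq})$ requires $g$ and $h$ absolutely continuous: for $p\ge 2$ — in particular for the odd integers $p\ge 3$ relevant to $(\ref{Lp_Poin_mean_0_2})$--$(\ref{Lp_Poin_mean_0_1})$ — the map $x\mapsto\sign(x)|x|^{p-1}$ is $C^1$ and this is automatic; for $1<p<2$ one first runs the argument with a regularization of $x\mapsto\sign(x)|x|^{p-1}$ near the origin and passes to the limit by dominated convergence; and the case $p=1$ of $(\ref{Cheeger_Lp_2})$--$(\ref{Cheeger_Lp_1})$ follows at once from the $L_1$-Poincar\'{e} inequality $(\ref{ineq_cheeger_dim1})$ and $(\ref{mean_median})$. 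The only genuinely delicate point is the uncentered case: the naive choice $h=\sign(u)|u|^{p-1}$ leaves the stray term $\mathbb{E}[u]\,\mathbb{E}[\sign(u)|u|^{p-1}]$ inside $\cov(u,h)$, and bounding it via $\cov(u,|u|^{p-1})=-\mathbb{E}[u]\,\mathbb{E}[|u|^{p-1}]$ costs a second factor $2$ and only reaches the constant $4p$; it is precisely the shift-invariance $\mathbb{E}[u\,|u|^{p-1}]=0$ supplied by the parity of $p$ that lets one fold both corrections into the single parameter $\alpha$ and recover the sharp $2p$.
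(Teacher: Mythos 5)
Your derivations of (\ref{Cheeger_Lp_2}) and (\ref{Cheeger_Lp_1}) are correct and essentially the paper's argument: test (\ref{ineq_cov_lplq}) against the dual function $\sign(u-\mathbb{E}[u])\,|u-\mathbb{E}[u]|^{p-1}$ and pay (or, under the extra hypothesis, not pay) the centering cost $|\mathbb{E}[h]|\le\|h\|_q$; the paper handles the non-smoothness of $\sign$ by Lipschitz approximation via Lemma \ref{lemma_bob_hou} rather than your case split ($C^1$ for $p\ge2$, regularization for $1<p<2$), which is an equivalent amount of work. Your proof of (\ref{Lp_Poin_mean_0_2}) is correct but genuinely different from the paper's: the paper applies the $L_1$--$L_\infty$ covariance inequality to $g=u^{p}$ and $h_n\to\sign(u^{p})$, then uses H\"older on $\|(u^{p})'\|_1\le p\|u'\|_p\|u\|_p^{p-1}$, whereas you stay with the $L_p$--$L_q$ inequality and the shifted test function $h_\alpha=(\sign(u)-\alpha)|u|^{p-1}$; your route is a nice alternative and delivers the same constant $2p$.

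The gap is in (\ref{Lp_Poin_mean_0_1}). The theorem's extra hypothesis is $\mathbb{E}[\sign(u^{p})]=0$, which for odd $p$ means $\mathbb{E}[\sign(u)]=0$ (zero is a median of $u$). Your $\alpha$ is $\mathbb{E}[\sign(u)|u|^{p-1}]/\mathbb{E}[|u|^{p-1}]$, and $\mathbb{E}[\sign(u)]=0$ does \emph{not} force $\mathbb{E}[\sign(u)\,|u|^{p-1}]=0$: for $p=3$, a variable $u$ distributed (approximately) on the three values $-a,\,0,\,2^{1/3}a$ with probabilities $1/2,\,1/4,\,1/4$ satisfies $\mathbb{E}[\sign(u)]=0$ and $\mathbb{E}[u^{3}]=0$, yet $\mathbb{E}[\sign(u)u^{2}]=a^{2}(2^{2/3}-2)/4\neq0$, and such a law of $u(X)$ is attainable with $X$ having a positive density and $u$ absolutely continuous. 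So under the stated hypothesis $\alpha$ need not vanish, and your argument only yields the constant $(1+|\alpha|)p\le 2p$, i.e.\ it reproves (\ref{Lp_Poin_mean_0_2}) but not (\ref{Lp_Poin_mean_0_1}); your proof would instead establish the sharper bound under the different condition $\mathbb{E}[\sign(u)|u|^{p-1}]=0$. The hypothesis $\mathbb{E}[\sign(u^{p})]=0$ is tailored to the $L_1$--$L_\infty$ route: with $g=u^{p}$ and Lipschitz $h_n\to\sign(u^{p})$ taking values in $[-1,1]$, one has $\|T_m(h_n-\mathbb{E}[h_n])\|_\infty\le 1+|\mathbb{E}[h_n]|\to 1+|\mathbb{E}[\sign(u^{p})]|=1$, and then $\|(u^{p})'\|_1=p\,\||u|^{p-1}u'\|_1\le p\|u'\|_p\|u\|_p^{p-1}$ gives the constant $p$; you should switch to that argument for the final claim.
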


Inequality (\ref{Cheeger_Lp_1}) is a $L_{p}$ version of Cheeger's inequality
(which is recovered by taking $p=2$ in (\ref{Cheeger_Lp_1}) since in this 
case the condition $ \mathbb{E}\left[(u-\mathbb{E} \left[ u\right])^{p-1} \right] = 0$ 
is satisfied for any $u$), see Corollary \ref{corollary_cheeger} above. 
Some general relations between optimal constants in $L_{p}$-Poincar\'{e} 
inequalities are studied in Chapter 2 of \cite{MR2146071}, in connection also with 
constants in some log-Sobolev type inequalities. Estimates of these optimal 
constants through the derivation of Muckenhoupt type criteria are further 
obtained in the framework of Orlicz spaces for dimension one in \cite{MR2146071}, Chapter 5.\\
Closer to our results, the following $L_p$-Poincar\'{e} inequality is obtained in \cite{BobHoud97}, Remark 7.2: if $p\geq 1$ then
\begin{equation*}
\left\Vert u\right\Vert _{p}\leq 4\sqrt{6}p\left[ Is\left( \mu \right) \right]
^{-1}\left\Vert \nabla u\right\Vert _{p}\text{ ,}
\end{equation*} 
where $\mu$ is a product probability measure, for instance on $\RR^n$, $n\geq1$. 
However, even in the case where $n=1$, it seems that the arguments used in 
\cite{BobHoud97} do not allow to get rid of the constant $4\sqrt{6}$, at least in a simple way.

The Laplace distribution on $\mathbb{R}$, $\epsilon(dx)=1/2 \cdot \exp(-\left\vert x \right\vert) dx$, is 
known to give equality in Cheeger's inequality, see for instance \cite{foug:05}. 
In this case, the isoperimetric constant is equal to one, $Is(\epsilon)=1$. 
Moreover, by applying inequalities (\ref{Cheeger_Lp_1}) and 
(\ref{Lp_Poin_mean_0_1}) for the exponential measure to monomials $u(x)=x^k$ 
and letting 
$k$ be odd and go to infinity, we see that actually, inequalities (\ref{Cheeger_Lp_1}) 
and (\ref{Lp_Poin_mean_0_1}) are optimal in the sense that the ratios 
between the respective left and right sides tend to one as $k$ goes to infinity. 

However, we still do not know if the factors of $2$ in inequalities 
(\ref{Cheeger_Lp_2}) and (\ref{Lp_Poin_mean_0_2}) are necessary or not.

\begin{proof}
Let us begin with the proof of Inequality (\ref{Cheeger_Lp_2}). Define $%
g=u-\mathbb{E} \left[ u\right] $, $h=\sign \left( u-\mathbb{E} \left[ u\right]
\right)$ and take a
sequence $h_{n}$ of Lipschitz functions with values in $\left[ -1,1\right] $
approximating the function $h$ (this sequence exists via Lemma \ref%
{lemma_bob_hou} above). Note that since $(p-1)q=p$, we have $\left\vert g \right\vert^{p-1} \in L_{q}$. 
By the dominated convergence theorem,%
\begin{equation*}
\cov%
\left( g,h_{n}\left\vert g \right\vert ^{p-1}\right) \rightarrow%
\cov%
\left( g,h\left\vert g \right\vert ^{p-1}\right) =\mathbb{E}\left[ \left\vert u-\mathbb{E} \left[ u\right]
\right\vert ^{p}\right] 
\end{equation*}
and
\begin{equation*}
\left\Vert h_{n}\left\vert g \right\vert ^{p-1}-\mathbb{E}\left[ h_{n}\left\vert g \right\vert ^{p-1} \right]\right\Vert
_{q}\rightarrow \left\Vert h\left\vert g \right\vert ^{p-1}-\mathbb{E}\left[ h\left\vert g \right\vert ^{p-1} \right] \right\Vert _{q}%
\end{equation*}%
as $n\rightarrow \infty$. 
Furthermore, Inequality (\ref{ineq_cov_lplq}) of Theorem \ref{theorem_lp_lq}
yields%
\begin{equation*}
\cov%
\left( g,h_{n}\left\vert g \right\vert ^{p-1}\right) \leq p\left[ Is\left( \mu \right) \right]
^{-1}\left\Vert g^{\prime }\right\Vert _{p}\left\Vert h_{n}\left\vert g \right\vert ^{p-1}
-\mathbb{E}\left[ h_{n}\left\vert g \right\vert ^{p-1} \right]\right\Vert_{q}
\end{equation*}%
and taking the limit on both sides we obtain,
\begin{equation}
\left\Vert u-\mathbb{E} \left[ u\right] \right\Vert _{p}^{p}\leq p\left[
Is\left( \mu \right) \right] ^{-1}\left\Vert g^{\prime }\right\Vert_{p}\left\Vert h\left\vert g \right\vert ^{p-1}
-\mathbb{E}\left[ h\left\vert g \right\vert ^{p-1} \right] \right\Vert _{q}\text{ .}  \label{ineq_lim}
\end{equation}
Now,%
\begin{eqnarray}
\lefteqn{\left\Vert h\left\vert g \right\vert ^{p-1}-\mathbb{E}\left[ h\left\vert g \right\vert ^{p-1} \right] \right\Vert _{q}} \notag \\
&\leq & \left\Vert h\left\vert g \right\vert ^{p-1} \right\Vert _{q} 
            + \left\vert \mathbb{E}\left[ h\left\vert g \right\vert ^{p-1} \right] \right\vert \notag \\
& \leq & \mathbb{E}\left[ \left\vert u-\mathbb{E} \left[ u\right] \right\vert ^{(p-1)q}\right] ^{1/q} 
            + \left\vert \mathbb{E}\left[ h\left\vert g \right\vert ^{p-1} \right] \right\vert \notag  \\
& = & \left\Vert u-\mathbb{E}\left[ u\right]\right\Vert _{p}^{p-1}
       +  \left\vert \mathbb{E}\left[\sign  \left( u-\mathbb{E} \left[ u\right]
\right)\left\vert u-\mathbb{E} \left[ u\right]\right\vert^{p-1} \right] \right\vert
  \label{h0_qF_0} \\
&\leq &2\left\Vert u-\mathbb{E}\left[ u\right]
\right\Vert _{p}^{p-1}\text{ .}  \label{h0_qF}
\end{eqnarray}%
Inequality (\ref{Cheeger_Lp_1}) then follows from combining (\ref{ineq_lim})
and (\ref{h0_qF_0}). Inequality (\ref{Cheeger_Lp_2}) is deduced by combining (\ref{ineq_lim})
and (\ref{h0_qF}).

Let us prove Inequalities (\ref{Lp_Poin_mean_0_2}) and (\ref{Lp_Poin_mean_0_1}). Take $g=u^{p}$ and a
sequence $h_{n}$ of Lipschitz functions with values in $\left[ -1,1\right] $
approximating the function $h=$sign$\left( g\right) $ (this sequence exists
via Lemma \ref{lemma_bob_hou} above). By the dominated convergence theorem,%
\begin{equation}
\cov%
\left( g,h_{n}\right) \rightarrow _{n\rightarrow \infty }%
\cov%
\left( g,h\right) =\mathbb{E}\left[ \left\vert u\right\vert ^{p}\right] 
\text{ .}  \label{limit_cov_lp}
\end{equation}%
Furthermore, Inequality (\ref{ineq_cov_lplq}) of Theorem \ref{theorem_lp_lq}
with $p=1$ and $q=+\infty$ yields%
\begin{eqnarray}
\cov \left( g,h_{n}\right) 
&\leq & \left[ Is\left( \mu \right) \right]
^{-1}\left\Vert g^{\prime }\right\Vert _{1} 
(1+\left\vert \mathbb{E}\left[\sign(u^{p}) \right] \right\vert)
 \notag \\
&=& p\left[ Is\left( \mu \right) \right] ^{-1}\left\Vert u^{\prime
}u^{p-1}\right\Vert _{1} (1+\left\vert \mathbb{E}\left[\sign(u^{p}) \right] \right\vert) \notag \\
&\leq & p\left[ Is\left( \mu \right) \right] ^{-1}\left\Vert u^{\prime
}\right\Vert _{p}\left\Vert u^{p-1}\right\Vert _{q} (1+\left\vert \mathbb{E}\left[\sign(u^{p}) \right] \right\vert) \label{cov_l1_0} \\
&\leq & 2p\left[ Is\left( \mu \right) \right] ^{-1}\left\Vert u^{\prime
}\right\Vert _{p}\left\Vert u^{p-1}\right\Vert _{q}
\text{ ,}
\label{cov_l1}
\end{eqnarray}
where the Inequality (\ref{cov_l1_0}) comes from the use of H\"{o}lder's inequality.
Now, noticing that $\left\Vert u^{p-1}\right\Vert _{q}=\left\Vert
u\right\Vert _{p}^{\left( p-1\right) /p}$ and combining (\ref{limit_cov_lp}%
) with (\ref{cov_l1_0}) and (\ref{cov_l1}) we get the results.
\end{proof}

One can notice that Theorem 7.1 of \cite{BobHoud97} establishes more general 
Poincar\'{e} type inequalities for some Orlicz spaces defined with product probability 
measures. $L_p$-norms are special cases of Orlicz norms considered in \cite{BobHoud97}. 
It seems that these Orlicz norms are outside of the scope of our techniques, that are based 
on covariance inequalities established in Section \ref{section_Lp_Cov}. 
However, taking advantage of the dimension one, we can easily provide the following 
Hardy type and Poincar\'{e} type inequalities for Orlicz norms.

Take $\mu $ a probability measure on $\mathbb{R}$ with density $p$ with
respect to the Lebesgue measure and denote by $m$ the median of $\mu $.
Let $N$ be a differentiable convex function and assume furthermore that $N$
is a Young function, meaning that $N$ is even, nonnegative, $N\left(
0\right) =0$ and $N\left( x\right) >0$ for $x\neq 0$. Assume also%
\begin{equation}
C_{N}=\sup \frac{xN^{\prime }\left( x\right) }{N\left( x\right) }<+\infty \label{def_C_N}
\end{equation}%
and set the Orlicz norm associated to $N$ ($N$-norm),%
\begin{equation}
\left\Vert f\right\Vert _{N}=\inf \left\{ \lambda >0:\mathbb{E}\left[
N\left( \frac{f}{\lambda }\right) \right] \leq 1\right\} <+\infty \text{ .} \label{def_Orlicz}
\end{equation}

\begin{prop} \label{prop_Hardy_N_norm}
For a smooth function $f$ with finite $N$-norm, the following Hardy-type
inequality holds,%
\begin{equation}
\left\Vert f - f\left( m\right)\right\Vert _{N} \leq C_{N}Is^{-1}\left( \mu \right) \left\Vert f^{\prime
}\right\Vert _{N}\text{ .} \label{ineq_Hardy_N_norm}
\end{equation}
Consequently,
\begin{equation}
\left\Vert f - \mathbb{E}\left[
f \right]\right\Vert _{N} \leq 2C_{N}Is^{-1}\left( \mu \right) \left\Vert f^{\prime
}\right\Vert _{N}\text{ .} \label{ineq_Poincare_N_norm}
\end{equation}
\end{prop}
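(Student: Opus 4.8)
The plan is to mimic the proof structure of Theorem~\ref{theorem_lp_lq} and Theorem~\ref{theorem_hardy}, replacing the $L_p$-norm by the $N$-norm throughout. First I would establish the Orlicz analogue of the operator bound, namely that for a smooth $f$ with finite $N$-norm one has $\|T_m f\|_N \le C_N \|f\|_N$, where $T_m$ is the averaging operator defined before Proposition~\ref{theorem_eP}. The key input is the homogeneity/convexity control encoded in \eqref{def_C_N}: since $C_N = \sup_x xN'(x)/N(x)$ is finite, the function $x \mapsto N(x)/x^{C_N}$ is nonincreasing on $(0,\infty)$ (equivalently $N(\lambda x)\le \lambda^{C_N}N(x)$ for $\lambda\ge 1$), which is exactly the scaling property one needs to run a Hardy-type argument at the level of Orlicz norms. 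Concretely, I would bound $\mathbb{E}[N(T_m f/\lambda)]$ by splitting at the median, writing each averaged quantity $\frac{1}{F(x)}\int_{(a,x]} f\,dF$ via Jensen against the probability measure $\mathbf{1}_{(a,x]}\,dF/F(x)$, then applying Fubini exactly as in the proof of Theorem~\ref{theorem_hardy}; the $C_N$-scaling of $N$ is what lets the resulting one-dimensional integral over $x$ be controlled by a constant depending only on $C_N$. This replaces the explicit computation with the choice $s=(p-1)/p^2$ by an abstract estimate; the analogue of the factor $(p/(p-1))^p$ becomes the factor $C_N$ in front.

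Second, with the operator bound $\|T_m f\|_N \le C_N\|f\|_N$ in hand, I would prove \eqref{ineq_Hardy_N_norm} by the duality/covariance route: pick $h$ in the unit ball of the dual Orlicz space such that $\mathbb{E}[(f-f(m))h] = \|f-f(m)\|_N$ (up to the usual normalization, and arranging $\mathbb{E}[h]$ appropriately so that the median centering is exploited, just as sign$(g-\mathrm{med}(g))$ was used in \eqref{formula_cov_L1}), then apply the covariance representation of Lemma~\ref{prop_cov_rep} together with the kernel identities of Lemma~\ref{prop_formulas_kernel}. This gives $|\cov(f,h)| \le Is(\mu)^{-1}\|f'\|_N\,\|T_m h_0\|_{N^*}$ by the same chain of inequalities as in the proof of Theorem~\ref{theorem_lp_lq}, using \eqref{IsoperimetricConstant} to replace $f(x)/\min\{F(x),1-F(x)\}$ by $Is(\mu)$ in the last step. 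The constant $C_N$ enters through the dual-side operator bound on $\|T_m h_0\|_{N^*}$; one checks that the relevant supremum ratio for the conjugate Young function $N^*$ is bounded by $C_N$ as well (this is the standard fact that $C_N$ and $C_{N^*}$ are conjugate in the sense $1/C_N + 1/C_{N^*} = 1$, so $\|T_m h_0\|_{N^*} \le C_{N^*}\|h_0\|_{N^*}$ and $C_{N^*}\le$ the quantity appearing—one must be a little careful here which constant lands in front).

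Third, \eqref{ineq_Poincare_N_norm} follows from \eqref{ineq_Hardy_N_norm} by the triangle inequality and the contraction property of conditional-expectation-type centerings in Orlicz norms: $\|f-\mathbb{E}[f]\|_N \le \|f-f(m)\|_N + \|f(m)-\mathbb{E}[f]\|_N = \|f-f(m)\|_N + |f(m)-\mathbb{E}[f]| $, and $|f(m)-\mathbb{E}[f]| = |\mathbb{E}[f(m)-f]| \le \|f(m)-f\|_N$ (since constants have $N$-norm equal to their absolute value, up to the normalization $N$ is chosen so that $\|1\|_N = 1/N^{-1}(1)$—more cleanly, $|\mathbb{E}[g]|\le \|g\|_N\|1\|_{N^*}$ and one arranges the normalization, or simply uses that $\mathbb{E}[|g|]\le \|g\|_N$ when $N(1)\le 1$); this yields the factor $2$.

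The main obstacle I anticipate is the first step: carrying out the Hardy-type Fubini computation purely in terms of the abstract condition $C_N<\infty$ rather than the explicit power $N(x)=|x|^p$. In the $L_p$ case one uses the exact exponent arithmetic ($1+p/q-p=0$, and the clean choice $s=(p-1)/p^2$) to collapse the inner integrals; for a general Young function one instead needs the monotonicity of $x\mapsto N(x)/x^{C_N}$ (or of $x\mapsto N^*(x)/x^{C_N/(C_N-1)}$ on the dual side) and a slightly cruder estimate of the $x$-integral, possibly with a loss that must be absorbed into $C_N$. Verifying that the final constant is exactly $C_N$ (and not $C_N$ times some other factor) will require care about which Orlicz-space normalization convention \eqref{def_Orlicz} forces, and about the precise duality relation between $C_N$ and the constant governing $T_m$ on $N^*$; everything else is a routine transcription of the arguments already given for Theorems~\ref{theorem_lp_lq} and~\ref{theorem_hardy}.
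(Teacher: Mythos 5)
Your route is genuinely different from the paper's, and as written it has two concrete gaps. The paper does not use any Orlicz--Hardy operator bound or duality at all: it reduces to $f(m)=0$, $\|f\|_N=1$, integrates $\mathbb{E}[N(f)]$ by parts on each side of the median to get $\mathbb{E}[N(f)]=\int N'(f)\,g\,d\mu$ with $g=(-f'\mathbf{1}_{(-\infty,m]}+f'\mathbf{1}_{[m,\infty)})\min\{F,1-F\}/p$ (here $p$ is the density), invokes Lemma 2.1 of Bobkov--Houdr\'e (a convexity fact: $\int N'(f)g\,d\mu\le\|g\|_N\int N'(f)f\,d\mu$ when $\|f\|_N=1$), bounds $\min\{F,1-F\}/p\le Is^{-1}(\mu)$ pointwise via \eqref{IsoperimetricConstant}, and finishes with $\int N'(f)f\,d\mu\le C_N$ from \eqref{def_C_N}; the factor $2$ in \eqref{ineq_Poincare_N_norm} comes from the same triangle-inequality step you propose. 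The paper even remarks, just before Proposition \ref{prop_Hardy_N_norm}, that its covariance-inequality machinery does not seem to extend to Orlicz norms, which is precisely the machinery you are trying to transplant.

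The gaps in your plan: first, the operator bound $\|T_m f\|_N\le C_N\|f\|_N$ is false in general --- already for $N(x)=|x|^p$ with $1<p<2$ the sharp Hardy constant is $p/(p-1)>p=C_N$; the Hardy constant on an Orlicz space is governed by the \emph{lower} index $\inf_x xN'(x)/N(x)$, not by $C_N$. Second, the claimed conjugacy $1/C_N+1/C_{N^*}=1$ is wrong: for the complementary Young function one has $\sup_x x(N^*)'(x)/N^*(x)$ conjugate to the lower index of $N$, not to $C_N$, and it can be infinite while $C_N<\infty$ (e.g.\ $N$ of $x\log x$ type, whose conjugate is of $\Psi_1$ type, for which the paper itself notes $C_{\Psi_1}=+\infty$). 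If you chase the correct indices, the relevant constant for $T_m$ on $L_{N^*}$ does heuristically come out as $C_N$, but proving an Orlicz--Hardy inequality with exactly that constant is the hard step you yourself flag as the obstacle, and on top of it the duality step loses constants: H\"older for two Luxemburg norms carries a factor $2$ (or forces the Amemiya norm on one side), exact norming elements $h$ need not exist, and the centering $\mathbb{E}[h]$ must still be handled. So the proposal does not establish \eqref{ineq_Hardy_N_norm} with the constant $C_N\,Is^{-1}(\mu)$; the paper's direct integration-by-parts argument is what makes that constant attainable.
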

One can notice that by taking $N=\left\vert \cdot \right\vert^p$, $p\geq 1$, 
for which $C_N=p$, Inequality (\ref{ineq_Poincare_N_norm}) generalizes 
Inequality (\ref{Cheeger_Lp_2}). However, it seems that a generalization of 
Inequality (\ref{Cheeger_Lp_1}) is not directly accessible by our arguments in the context of Orlicz norms. 
\begin{proof}
Let us prove first Inequality (\ref{ineq_Hardy_N_norm}). 
We can assume without loss of generality that $f$ has a compact support, that $f(m)=0$ and
that $\left\Vert f\right\Vert _{N}=1$. In particular, $\lim_{x \rightarrow -\infty} N(f(x)) F(x) = 0$ and
$\lim_{x \rightarrow \infty} N(f(x))(1- F(x)) = 0$. It follows that%
\begin{eqnarray*}
\mathbb{E}\left[ N\left( f\right) \right]  &=&\int N\left( f\left(
x\right) \right) p\left( x\right) dx \\
&=&\left[ N\left( f\right) F\right] _{-\infty }^{m}-\int_{-\infty
}^{m}f^{\prime }\left( x\right) N^{\prime }\left( f\left( x\right)
\right) F\left( x\right) dx+ \\
&&+\left[ N\left( f\right) \left( F-1\right) \right] _{m}^{+\infty
}-\int_{m}^{+\infty }f^{\prime }\left( x\right) N^{\prime }\left(
f\left( x\right) \right) \left( 1-F\left( x\right) \right) dx \\
&=&\int (-f^{\prime }(x)\mathbf{1}_{(-\infty ,m]}\left( x\right)
+f^{\prime }(x)\mathbf{1}_{[m,\infty )}\left( x\right) )N^{\prime
}(f(x))\min \left\{ \frac{F(x)}{p(x)},\frac{1-F(x)}{p(x)}\right\} d\mu (x)%
\text{ .}
\end{eqnarray*}%
Now, by Lemma 2.1 of \cite{BobHoud97}, if 
\begin{equation*}
\int N\left( g\right) d\mu \leq \int N\left( f\right) d\mu 
\end{equation*}%
then%
\begin{equation*}
\int N^{\prime }\left( f\right) gd\mu \leq \int N^{\prime }\left( f\right)
fd\mu \text{ .}
\end{equation*}%
This directly implies that, in general,%
\begin{equation*}
\left\Vert f\right\Vert _{N}\int N^{\prime }\left( \frac{f}{\left\Vert
f\right\Vert _{N}}\right) gd\mu \leq \left\Vert g\right\Vert _{N}\int
N^{\prime }\left( \frac{f}{\left\Vert f\right\Vert _{N}}\right) fd\mu \text{
.}
\end{equation*}%
Then, by setting $g=(-f^{\prime }\mathbf{1}_{(-\infty ,m]}+f^{\prime }%
\mathbf{1}_{[m,\infty )})\min \left\{ \frac{F}{p},\frac{1-F}{p}%
\right\} $, it follows that%
\begin{eqnarray*}
\mathbb{E}\left[ N\left( f\right) \right] -N(f(m)) = \int N^{\prime}(f)gd\mu  
\leq \left\Vert g\right\Vert _{N}\int N^{\prime }\left( f\right) fd\mu 
\text{ .}
\end{eqnarray*}
Since $\min \left\{ \frac{F}{p},\frac{1-F}{p}\right\} \leq Is^{-1}\left( \mu \right) $ 
almost everywhere and $N$ is even, we have 
$\left\Vert g\right\Vert_{N}\leq Is^{-1}\left( \mu \right) \left\Vert f^{\prime }\right\Vert _{N}$.
In addition,
\begin{eqnarray*}
\int N^{\prime }\left( f\right) fd\mu  
&=&\int N^{\prime }\left( f\right) f \mathbf{1}_{f\leq 0}+N^{\prime }\left( f\right) f\mathbf{1}_{f>0}d\mu  \\
&=&\int -fN^{\prime }\left( -f\right) \mathbf{1}_{f\leq 0}+N^{\prime }\left(
f\right) f\mathbf{1}_{f>0}d\mu  \\
&\leq &C_{N}\int N\left( f\right) d\mu =C_{N}\text{.}
\end{eqnarray*}
This concludes the proof of Inequality (\ref{ineq_Hardy_N_norm}). Then Inequality (\ref{ineq_Poincare_N_norm}) 
is deduced from (\ref{ineq_Hardy_N_norm}) by noticing that
\begin{equation*}
\left\Vert f - \mathbb{E}\left[
f \right]\right\Vert _{N} \leq \left\Vert f - f\left( m\right)\right\Vert _{N} 
+\left\Vert \mathbb{E}[f- f\left( m\right)] \right\Vert _{N} \leq 2\left\Vert f - f\left( m\right)\right\Vert _{N}\text{ .}
\end{equation*}

\end{proof}

To conclude this section, we combine the $L_p$-Poincar{\'e} inequality (\ref{Cheeger_Lp_2}) 
with the covariance inequality given in Theorem \ref{theorem_lp_lq}. 
We obtain the following covariance inequality, which can be compared to inequality (\ref{cov_rho}) - 
which is a consequence of asymmetric Brascamp-Lieb inequality - for strongly log-concave measures.

\begin{prop}
\label{th_cov_LpLq_final}Let $\mu $ be a probability measure with a positive
density $f$ on $\mathbb{R}$ and cumulative distribution $F$. Let $g\in L_{p}\left( F\right) $
and $h\in L_{q}\left( F\right) ,$ $p^{-1}+q^{-1}=1$, $p,q \in (1,+\infty)$. Assume also that $g$
and $h$ are absolutely continuous. Then we have,%
\begin{equation}
\left\vert 
\cov%
( g ,h) \right\vert \leq
2\left( p+q\right) \left[ Is\left( \mu \right) \right] ^{-2}\left\Vert
g^{\prime }\right\Vert _{p}\left\Vert h^{\prime }\right\Vert _{q}\text{ .%
}  \label{cov_ineq_LpLq_final}
\end{equation}
\end{prop}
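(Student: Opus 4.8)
The plan is to chain together two results already established in the paper: the $L_{p}$--$L_{q}$ covariance inequality (\ref{ineq_cov_lplq}) of Theorem \ref{theorem_lp_lq} and the $L_{q}$-Poincar\'{e} inequality (\ref{Cheeger_Lp_2}) of Theorem \ref{Theorem_Lp_Poincare_ineq}. First I would note that the statement is trivial when $Is(\mu)=0$, since then the right-hand side of (\ref{cov_ineq_LpLq_final}) is infinite; so one may assume $Is(\mu)>0$, i.e.\ $\mu$ has finite isoperimetric constant, which is precisely the standing hypothesis under which Theorem \ref{Theorem_Lp_Poincare_ineq} applies.

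The substance is then a two-line computation. Applying (\ref{ineq_cov_lplq}) to the pair $(g,h)$ --- legitimate since $g\in L_{p}(F)$ and $h\in L_{q}(F)$ are absolutely continuous and $p\in(1,+\infty)$ --- gives
\[
|\cov(g,h)|\le p\,[Is(\mu)]^{-1}\,\|g'\|_{p}\,\|h_{0}\|_{q},\qquad h_{0}:=h-\mathbb{E}[h].
\]
Next, $h_{0}$ is absolutely continuous with $h_{0}'=h'$ and lies in $L_{q}(F)$, and $q\in[1,+\infty)$, so the Poincar\'{e}-type inequality (\ref{Cheeger_Lp_2}) applied to $h$ with exponent $q$ yields $\|h_{0}\|_{q}=\|h-\mathbb{E}[h]\|_{q}\le 2q\,[Is(\mu)]^{-1}\,\|h'\|_{q}$. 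Substituting,
\[
|\cov(g,h)|\le 2pq\,[Is(\mu)]^{-2}\,\|g'\|_{p}\,\|h'\|_{q}.
\]
To conclude, I would invoke the conjugacy $p^{-1}+q^{-1}=1$, which after multiplication by $pq$ reads $p+q=pq$; hence $2pq=2(p+q)$ and (\ref{cov_ineq_LpLq_final}) is exactly the inequality just obtained.

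I do not expect a genuine obstacle here: the argument reduces to invoking two earlier theorems in the correct order and recognizing that the advertised constant $2(p+q)$ is literally the product $2pq$ produced by the chaining. The only mildly delicate points --- hardly obstacles --- are (i) verifying that the absolute continuity and $L_{q}$-integrability hypotheses needed to apply (\ref{ineq_cov_lplq}) and (\ref{Cheeger_Lp_2}) are inherited by $h_{0}$, which is immediate, and (ii) the harmless case split on whether $Is(\mu)$ is zero. One could equally well put $h$ in the $L_{p}$ slot of (\ref{ineq_cov_lplq}) and $g$ in the $L_{q}$ slot and then bound $\|g_{0}\|_{p}$ by (\ref{Cheeger_Lp_2}); this gives the symmetric bound $2qp\,[Is(\mu)]^{-2}\,\|h'\|_{q}\,\|g'\|_{p}$, the same quantity, so the estimate is automatically symmetric under $(g,p)\leftrightarrow(h,q)$, as it should be.
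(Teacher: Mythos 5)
Your proof is correct and is essentially the paper's own argument: the paper likewise obtains (\ref{cov_ineq_LpLq_final}) by combining Theorem \ref{theorem_lp_lq} (inequality (\ref{ineq_cov_lplq})) with inequality (\ref{Cheeger_Lp_2}) of Theorem \ref{Theorem_Lp_Poincare_ineq} applied to $u=h$ with exponent $q$, and then using $pq=p+q$. Your extra remarks on the trivial case $Is(\mu)=0$ and on the hypotheses inherited by $h_{0}$ are harmless refinements of the same route.
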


Notice that using H\"{o}lder's inequality together with Inequality (\ref%
{Cheeger_Lp_2})
of Theorem \ref{Theorem_Lp_Poincare_ineq} already yields,%
\begin{eqnarray*}
\left\vert 
\cov%
\left( g ,h \right) \right\vert &\leq
&\left\Vert g-E_{F}\left[ g\right] \right\Vert _{p}\left\Vert h-E_{F}\left[
h\right] \right\Vert _{p} \\
&\leq &4pq\left[ Is\left( \mu \right) \right] ^{-2}\left\Vert g^{\prime
}\right\Vert _{p}\left\Vert h^{\prime }\right\Vert _{q} \\
&=&4\left( p+q\right) \left[ Is\left( \mu \right) \right] ^{-2}\left\Vert
g^{\prime }\right\Vert _{p}\left\Vert h^{\prime }\right\Vert _{q}\text{ .%
}
\end{eqnarray*}

\begin{proof}
This simply follows by combining Theorem \ref{theorem_lp_lq}\ and Inequality
(\ref{Cheeger_Lp_2}) of Theorem \ref{Theorem_Lp_Poincare_ineq}\ with $u=h$.
\end{proof}

\section{Moment inequalities and concentration\label{section_moment_estimates}}

We investigate here other consequences of covariance inequalities of the
forms obtained in Sections \ref{section_L1_Cov}\ and \ref{section_Lp_Cov}. 
For a measure $\mu $ on $\mathbb{R}$, we will say in this section that $\mu $ 
satisfies a $\left( p,q\right) $ covariance inequality, $p,q\geq 1$ and $%
p^{-1}+q^{-1}=1 $, with constant $c_{p}>0$ if for any absolutely continuous
functions $g\in L_{p}$ and $h\in L_{q}$,%
\begin{equation}
\left\vert 
\cov%
\left( g,h\right) \right\vert \leq c_{p}\left\Vert g^{\prime
}\right\Vert _{p}\left\Vert h_{0}\right\Vert _{q}\text{ .}
\label{cov_LpLq_c}
\end{equation}%
We proved that one can always take $c_{p}=p\left[ Is\left( \mu \right) %
\right] ^{-1}$ (the case where $Is\left( \mu \right) =0$ corresponding to $%
c_{p}=+\infty $ and being uninformative). From such covariance inequalities, 
we can control the growth of moments of $\mu $. 

\begin{prop} \label{theorem_moment_bounds}
Assume that $\mu $ satisfies a $\left( p,q\right) $
covariance inequality with constant $c_{p}>0$ and let 
$X$ be a random variable with distribution $\mu $. Then 
\begin{equation}
\left\Vert X-\bb{E}\left[ X\right] \right\Vert _{p}
:= \left \{ \bb{E} \left[ | X-\bb{E}\left[ X\right] |^{p} \right] \right \}^{1/p}\leq 2c_{p}\text{ .}
\label{bound_moment_p}
\end{equation}%
\end{prop}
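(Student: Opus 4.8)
The plan is to specialize the proof of the $L_p$-Poincar\'e inequality (\ref{Cheeger_Lp_2}) to the test function $g(x)=x$: then $g'\equiv 1$ and $\|g'\|_p=1$ because $\mu$ is a probability measure, so (\ref{bound_moment_p}) is nothing but (\ref{Cheeger_Lp_2}) applied to $u=\mathrm{id}$, but now derived from the abstract covariance inequality (\ref{cov_LpLq_c}) instead of from (\ref{ineq_cov_lplq}). Since (\ref{cov_LpLq_c}) is available only for absolutely continuous functions and we do not know a priori that $X\in L_p$, or even $X\in L_1$, I would first truncate: for $N>0$ set $\psi_N(x)=(x\wedge N)\vee(-N)$, which is $1$-Lipschitz, hence absolutely continuous with $|\psi_N'|\le 1$, and work with $g_N=\psi_N-\mathbb{E}[\psi_N(X)]$, a bounded, absolutely continuous, $\mu$-mean-zero function with $\|g_N'\|_p\le 1$.

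The core step is the usual duality trick. For $p>1$, with conjugate exponent $q=p/(p-1)$, take $h_N=\mathrm{sign}(g_N)\,|g_N|^{p-1}$; since $g_N$ is bounded and Lipschitz, this is a bounded, absolutely continuous element of $L_q(F)$ (for $1<p<2$ one replaces $\mathrm{sign}(g_N)$ by Lipschitz approximations exactly as in the proof of Theorem \ref{Theorem_Lp_Poincare_ineq}). As $\mathbb{E}[g_N]=0$ we get $\cov(g_N,h_N)=\mathbb{E}[g_N h_N]=\mathbb{E}[|g_N|^p]$, while $(p-1)q=p$ gives $\|h_N\|_q=(\mathbb{E}[|g_N|^p])^{1/q}$ and therefore, by Jensen's inequality ($\mu$ being a probability measure), $\|(h_N)_0\|_q\le \|h_N\|_q+|\mathbb{E}[h_N]|\le 2\|h_N\|_q$. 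Plugging into (\ref{cov_LpLq_c}),
\[
\mathbb{E}[|g_N(X)|^p]=\cov(g_N,h_N)\le c_p\,\|g_N'\|_p\,\|(h_N)_0\|_q\le 2c_p\,\bigl(\mathbb{E}[|g_N(X)|^p]\bigr)^{1/q},
\]
and since $1-1/q=1/p$ this yields $\|\psi_N(X)-\mathbb{E}\psi_N(X)\|_p\le 2c_p$ for every $N$. The case $p=1$, $q=\infty$ is identical except that $\mathrm{sign}(g_N)$ is not absolutely continuous, so one approximates it by Lipschitz functions $h_n$ valued in $[-1,1]$ with $h_n\to\mathrm{sign}(g_N)$ pointwise (Lemma \ref{lemma_bob_hou}) and passes to the limit exactly as in the proof of Proposition \ref{theorem_eP}: dominated convergence gives $\cov(g_N,h_n)\to\mathbb{E}|g_N(X)|$, while $\|(h_n)_0\|_\infty\le 1+|\mathbb{E}h_n|$, so (\ref{cov_LpLq_c}) gives $\mathbb{E}|g_N(X)|\le 2c_1$.

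It remains to let $N\to\infty$. Using $|\psi_N(x)|=\min(|x|,N)\uparrow|x|$ together with the elementary bound $|\mathbb{E}Z-\mathrm{med}(Z)|\le 2\,\mathbb{E}|Z-\mathbb{E}Z|$ applied to $Z=\psi_N(X)$, whose median $\psi_N(\mathrm{med}(X))$ is bounded by $|\mathrm{med}(X)|$ uniformly in $N$, one obtains $\mathbb{E}|\psi_N(X)|\le |\mathrm{med}(X)|+6c_p$ for all $N$, so monotone convergence forces $X\in L_1$; then $\mathbb{E}\psi_N(X)\to\mathbb{E}X$ and $\psi_N(X)-\mathbb{E}\psi_N(X)\to X-\mathbb{E}X$ pointwise, and Fatou's lemma gives $\|X-\mathbb{E}X\|_p\le\liminf_N\|\psi_N(X)-\mathbb{E}\psi_N(X)\|_p\le 2c_p$, which is (\ref{bound_moment_p}). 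I do not expect a genuine obstacle here: the only two points requiring care are that the integrability of $X$ is not hypothesized but must be extracted (whence the truncation together with the median/Fatou passage to the limit) and the failure of absolute continuity of $\mathrm{sign}(g_N)$ when $p=1$, both handled by the same devices already used earlier in the paper.
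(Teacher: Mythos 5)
Your proof is correct, and its core is exactly the paper's argument: test the $(p,q)$ covariance inequality against $g$ and $h=\sign(g)\left\vert g\right\vert^{p-1}$, use $\cov(g,h)=\mathbb{E}\left[\left\vert g\right\vert^{p}\right]$ (since $\mathbb{E}[g]=0$), bound $\left\Vert h_{0}\right\Vert_{q}\leq 2\left\Vert h\right\Vert_{q}=2\left(\mathbb{E}\left[\left\vert g\right\vert^{p}\right]\right)^{1/q}$, and divide. The paper does this in one display, applying (\ref{cov_LpLq_c}) directly to $g(X)=X-\mathbb{E}[X]$, which tacitly presupposes that $\mathbb{E}[X]$ exists and $X\in L_{p}$ (so that $g\in L_{p}$, $h\in L_{q}$ as the definition of the covariance inequality requires, and so that the division by $\left\Vert X-\mathbb{E}[X]\right\Vert_{p}^{p-1}$ is legitimate). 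What you do differently is to run the same computation on the truncations $\psi_{N}(X)-\mathbb{E}[\psi_{N}(X)]$, extract $X\in L_{1}$ via the mean--median comparison and monotone convergence, and then pass to the limit by Fatou; you also handle $p=1$ by the Lipschitz approximation of the sign function (Lemma \ref{lemma_bob_hou}), which the paper's proof would also need since $\sign$ is not absolutely continuous. So your route buys a genuinely unconditional statement (finiteness of the $p$-th moment is part of the conclusion, which is how the paper later uses the proposition), at the cost of the truncation scaffolding; the paper's version is shorter but only rigorous under an implicit a priori integrability assumption. One small remark: for $1<p<2$ the extra Lipschitz approximation of $\sign(g_{N})$ is unnecessary, since $x\mapsto\sign(x)\left\vert x\right\vert^{p-1}$ is absolutely continuous on compacts and its composition with the piecewise linear, monotone $\psi_{N}-c$ is already absolutely continuous and bounded; this caution is harmless but not needed.
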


For $X$ a real random variable, define the Orlicz norm $\| X \|_{\Psi_{1}} $ 
by (\ref{def_Orlicz}) with $N(v)\equiv\Psi_{1} (v) \equiv \exp (\left\vert v \right\vert) -1$. 
Note that applying (\ref{def_C_N}) with $N=\Psi_{1}$ gives $C_{\Psi_{1}}=+\infty$ 
so that Proposition \ref{prop_Hardy_N_norm} does not apply to the $\Psi_{1}$-norm. 
From Proposition \ref{theorem_moment_bounds}, note also that if there exists $c\in \mathbb{R}_{+}$ such that $c_{p}\leq cp$
then the moments of $X$ grow linearly and 
\begin{equation*}
\left\Vert X-\bb{E}\left[ X\right] \right\Vert _{\Psi _{1}}\leq C\sup_{p>1}\frac{\left\Vert X-\bb{E}\left[ X\right]
\right\Vert _{p}}{p}\leq C\sup_{p>1}\frac{c_{p}}{p}\text{ ,}
\end{equation*}%
for a numerical constant $C>0$. 
\cite{Saumard-Wellner:17} Appendix A show that the inequality 
holds with $C=4$.
If in addition $\sup_{p>1}p^{-1}c_{p}$
is finite then $\mu $ achieves a sub-exponential concentration inequality.
In particular, we recover the well-known fact that if 
$Is\left( \mu \right) >0$ then all moments of $X$ are finite, they grow linearly, $X$ has a finite
exponential moment and achieves a sub-exponential concentration inequality.
More precisely, in this case,%
\begin{equation*}
\left\Vert X-\bb{E}\left[ X\right] \right\Vert _{p}\leq p\left[ Is\left(
\mu \right) \right] ^{-1}\text{ and }\left\Vert X-\bb{E}\left[ X\right]
\right\Vert _{\Psi _{1}\left( \mu \right) }\leq C\left[ Is\left( \mu \right) %
\right] ^{-1}\text{ .}
\end{equation*}

\begin{proof}
It suffices to prove (\ref{bound_moment_p}).  The rest is simple and/or
well-known and thus left to the reader. From (\ref{cov_LpLq_c}) applied with 
$g\left( X\right) =X-\bb{E}\left[ X\right] $ and \\
$h\left( X\right) =$sign$%
\left( X-\bb{E}\left[ X\right] \right) \left\vert X-\bb{E}\left[ X\right]
\right\vert ^{p-1}$, we get%
\begin{eqnarray*}
\left\Vert X-\bb{E}\left[ X\right] \right\Vert _{p} 
& = &  \left\vert  \cov_{\mu }\left( g,h\right) \right\vert \\
&\leq & c_p \left\Vert h_{0}\right\Vert _{q}\leq 2c_p \left\Vert h\right\Vert _{q} \\
&\leq & 2c_p\left\Vert X-\bb{E}\left[ X\right] \right\Vert _{p}^{1-1/p}
\end{eqnarray*}%
and thus (\ref{bound_moment_p}) is proved.
\end{proof}

We have the following moment comparison theorem for measures with finite isoperimetric constant.
\begin{prop}
\label{theorem_moment_comp}
Let $X$ be a real centered random variable of distribution $\mu $ with finite isoperimetric constant and
density $f$ on $\mathbb{R}$. Then, for any $p > 1$,%
\begin{equation*}
\left\Vert  X \right\Vert_{p+1} \leq \left(\frac{p^{2}}{(p-1)Is\left( \mu _{\left\Vert X\right\Vert _{p}}\right)} \right)^{1/(p+1)}
\left\Vert X \right\Vert_{p} \text{ ,}
\end{equation*}%
where $\mu _{c}$ is the distribution associated to the density 
$f_{c}\left( x\right) =cf\left( cx\right) $.
\end{prop}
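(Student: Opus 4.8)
The plan is to realize the $(p+1)$-st absolute moment of $X$ as a covariance and then apply the $L_{r}$--$L_{s}$ covariance inequality (\ref{ineq_cov_lplq}) of Theorem \ref{theorem_lp_lq}, with the derivative placed on a power of $X$ rather than on $X$ itself. Since $\mu$ has a finite isoperimetric constant, all its moments are finite, so $c:=\|X\|_{p}$ is finite and, $\mu$ being non-degenerate, strictly positive; I set $Y:=X/c$. By the change of variables formula $Y$ has density $x\mapsto cf(cx)$, i.e.\ $Y\sim\mu_{c}=\mu_{\|X\|_{p}}$, and moreover $\mathbb{E}[Y]=0$ and $\|Y\|_{p}=1$. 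It then suffices to show
\[
\|Y\|_{p+1}^{p+1}\ \le\ \frac{p^{2}}{(p-1)\,Is(\mu_{c})}\,,
\]
since raising this to the power $1/(p+1)$ and multiplying by $c=\|X\|_{p}$ yields the stated bound.

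To obtain the displayed inequality I would take $g(y)=y$ and $h(y)=\sign(y)\,|y|^{p}$. For $p>1$ the function $h$ is of class $C^{1}$ with $h'(y)=p\,|y|^{p-1}$, hence absolutely continuous (as is $g$), and since all moments of $Y$ are finite one has $g\in L_{p}(F)$ and $h\in L_{p/(p-1)}(F)$. As $\mathbb{E}[Y]=0$,
\[
\cov(g,h)=\mathbb{E}[gh]=\mathbb{E}\big[Y\cdot\sign(Y)\,|Y|^{p}\big]=\mathbb{E}\big[|Y|^{p+1}\big]=\|Y\|_{p+1}^{p+1}.
\]
Applying (\ref{ineq_cov_lplq}) to the pair $(h,g)$, with exponent $r=p/(p-1)\in(1,\infty)$ on the differentiated factor and conjugate exponent $p$ on the other, gives (writing $Is:=Is(\mu_{c})$)
\[
\|Y\|_{p+1}^{p+1}=|\cov(h,g)|\ \le\ \frac{p}{p-1}\,Is^{-1}\,\|h'\|_{r}\,\big\|g-\mathbb{E}[g]\big\|_{p}.
\]
Here $\|g-\mathbb{E}[g]\|_{p}=\|Y\|_{p}=1$, and since $(p-1)r=p$,
\[
\|h'\|_{r}^{r}=\mathbb{E}\big[(p|Y|^{p-1})^{r}\big]=p^{r}\,\mathbb{E}[|Y|^{p}]=p^{r},
\]
so $\|h'\|_{r}=p$. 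Substituting leaves $\|Y\|_{p+1}^{p+1}\le p^{2}\big((p-1)\,Is(\mu_{c})\big)^{-1}$, which is the displayed bound; undoing the scaling $Y=X/c$ then completes the argument.

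The proof is essentially bookkeeping of conjugate exponents, and I do not expect any genuine obstacle. The one point worth flagging is why the dilated measure $\mu_{\|X\|_{p}}$ appears rather than $\mu$: it is precisely the normalization $\|Y\|_{p}=1$ that makes the factor $\|h'\|_{r}=p\|Y\|_{p}^{p-1}$ collapse to $p$, and under this dilation the isoperimetric constant transforms, by the representation (\ref{IsoperimetricConstant}), as $Is(\mu_{c})=c\,Is(\mu)$; carrying the computation out directly on $\mu$ would instead produce the equivalent but less symmetric statement $\|X\|_{p+1}^{p+1}\le p^{2}\big((p-1)\,Is(\mu)\big)^{-1}\|X\|_{p}^{p}$.
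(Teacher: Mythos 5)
Your proof is correct and follows essentially the same route as the paper: it applies the $L_{p}$--$L_{q}$ covariance inequality (\ref{ineq_cov_lplq}) to the pair $\{y,\ \sign(y)|y|^{p}\}$ with the derivative on the power function and conjugate exponents $p/(p-1)$ and $p$, yielding $\mathbb{E}[|X|^{p+1}]\le \frac{p^{2}}{p-1}Is(\mu)^{-1}\mathbb{E}[|X|^{p}]$. The only (cosmetic) difference is that you normalize to $Y=X/\|X\|_{p}$ at the outset, whereas the paper applies the inequality to $X$ directly and invokes the same scaling identity $Is(\mu_{c})=c\,Is(\mu)$ at the end.
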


To the best of our knowledge, the result of Proposition \ref{theorem_moment_comp} is new. 
Its proof is based on the covariance inequality (\ref{ineq_cov_lplq}) of Theorem \ref{theorem_lp_lq}.
On the other hand, Proposition \ref{theorem_moment_comp} is apparently considerably weaker 
than Theorem 8.1 of \cite{BobHoud97} who considered the following setting:
suppose that $v_1, \ldots , v_n$ are vectors in a Banach space $B$, and let $\xi_1, \ldots , \xi_n$ 
be i.i.d. real-valued random variables with $E \xi_1=0$, $\xi_1 \neq 0$ a.s., and with law $\mu$ such that
$Is (\mu) >0$.  Then, with $S \equiv \| \sum_{j=1}^n \xi_j v_j \|_B$ and $N$ a Young function satisfying 
$K_N \equiv \| \Lambda \|_N < \infty$ where $\Lambda $ is the standard Laplace distribution on $\RR$, 
they show that 
\begin{eqnarray}
\| S \|_N \le \left (2 + \frac{8 \sqrt{3} K_N}{Is (\mu) \mathbb{E} | \xi_1 |} \right ) \| S \|_1 .
\label{BobkovHoudre-KK}
\end{eqnarray}
Specializing this to the case $n=1$ and $N(x) = |x|^p$ with $p \ge 1$ yields 
\begin{eqnarray*}
\| \xi_1 \|_{p} \le \left (2 + \frac{8 \sqrt{3} \Gamma (p+1)}{Is (\mu) \mathbb{E} | \xi_1 |} \right ) \| \xi_1 \|_1 .
\end{eqnarray*}
Since our Proposition~ \ref{theorem_moment_comp}  does not allow $p=1$ and since it does 
not yield a Khintchine-Kahane type result as in (\ref{BobkovHoudre-KK}), it is evidently much weaker. 


\begin{proof}
We use Inequality (\ref{ineq_cov_lplq}) and take $g\left( x\right) =\sign(x)\left\vert x \right\vert^{p}$, 
$h\left( x\right) =x$ and the value $q$ in Inequality (\ref{ineq_cov_lplq}) to be equal to the value of $p$ in 
Theorem \ref{theorem_moment_comp}. Note that we have $g'(x)=p\left\vert x \right\vert^{p-1}$ if $x\neq 0$. This gives%
\begin{eqnarray*}
 \mathbb{E}\left[ \left\vert X \right\vert^{p+1}\right]
   &\leq &\frac{p^{2}}{%
p-1}\left[ Is\left( \mu \right) \right] ^{-1}\mathbb{E}\left[ \left(
\left\vert X \right\vert^{p-1}\right) ^{\frac{p}{p-1}}\right] ^{1-1/p}\mathbb{E}\left[ \left\vert X \right\vert^{p}\right]
^{1/p} \\
&=&\frac{p^{2}}{p-1}\left[ Is\left( \mu \right) \right] ^{-1}\mathbb{E}\left[
\left\vert X \right\vert^{p}\right] \text{ .}
\end{eqnarray*}%
Now the conclusion follows by remarking that by homogeneity, $\left\Vert
X\right\Vert _{p}\left[ Is\left( \mu _{\left\Vert X\right\Vert
_{p}}\right) \right] ^{-1}=\left[ Is\left( \mu \right) \right] ^{-1}$.
\end{proof}

For log-concave measures, Theorem \ref{theorem_moment_comp} has the following corollary.

\begin{cor}
\label{corollary_moment_logcon}
Let $X$ be a real centered log-concave random variable. Then, for any $p \geq 2$,
\begin{equation}
\left\Vert  X \right\Vert_{p+1} \leq \left(\frac{\sqrt{3}p^{2}}{p-1} \right)^{1/(p+1)}
\left\Vert X \right\Vert_{p} \text{ .} \label{moment_log_concave_p}
\end{equation}
In particular,
\begin{equation}
\mathbb{E}\left[ \left\vert X\right\vert^{3}\right] \leq 4\sqrt{3}\mathbb{E}\left[ X^{2}\right]
^{3/2}\text{ .}  \label{moment_log_concave_3_2}
\end{equation}
\end{cor}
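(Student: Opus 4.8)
The plan is to derive Corollary~\ref{corollary_moment_logcon} directly from Proposition~\ref{theorem_moment_comp} by quantifying the isoperimetric constant of a centered log-concave measure. Recall that a centered random variable $X$ with log-concave law and variance $\sigma^2 = \|X\|_2^2$ satisfies the classical lower bound $Is(\mu) \geq c/\sigma$ for an explicit constant; more precisely, for centered log-concave $\mu$ one has $Is(\mu) \geq 1/(\sqrt{3}\,\|X\|_2)$ (this is the one-dimensional form of the bound relating the Cheeger constant to the variance, traceable to the same circle of ideas as \cite{BobHoud97} and the Laplace extremal case). By the scaling invariance already used in the proof of Proposition~\ref{theorem_moment_comp}, $Is(\mu_{\|X\|_p}) = \|X\|_p \cdot Is(\mu) / \|X\|_p$ — more usefully, applying the bound to the rescaled density $f_c$ with $c = \|X\|_p$, the quantity $Is(\mu_{\|X\|_p})$ is controlled below in terms of the second moment of the rescaled variable.

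The key steps, in order, would be as follows. First, fix $p \geq 2$ and apply Proposition~\ref{theorem_moment_comp} to get
\begin{equation*}
\|X\|_{p+1} \leq \left(\frac{p^2}{(p-1)\,Is(\mu_{\|X\|_p})}\right)^{1/(p+1)} \|X\|_p .
\end{equation*}
Second, observe that $\mu_{\|X\|_p}$ is the law of $X/\|X\|_p$ (up to the convention on $f_c$), which is again centered and log-concave, and whose $L_2$ norm is $\|X\|_2/\|X\|_p \leq 1$ since $p \geq 2$. Third, invoke the log-concave Cheeger bound $Is(\nu) \geq 1/(\sqrt{3}\,\|Y\|_2)$ for a centered log-concave $Y$; applied to $Y = X/\|X\|_p$ this gives $Is(\mu_{\|X\|_p}) \geq 1/(\sqrt{3}\,\|X\|_2/\|X\|_p) \geq 1/\sqrt{3}$, the last inequality because $\|X\|_2 \leq \|X\|_p$. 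Substituting $Is(\mu_{\|X\|_p})^{-1} \leq \sqrt{3}$ into the displayed inequality yields \eqref{moment_log_concave_p}. Finally, \eqref{moment_log_concave_3_2} follows by taking $p = 2$ in \eqref{moment_log_concave_p}, which gives $\|X\|_3 \leq (4\sqrt{3})^{1/3}\|X\|_2$, and then cubing: $\mathbb{E}[|X|^3] = \|X\|_3^3 \leq 4\sqrt{3}\,\|X\|_2^3 = 4\sqrt{3}\,\mathbb{E}[X^2]^{3/2}$.

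The main obstacle I anticipate is pinning down the precise constant $\sqrt{3}$ in the log-concave isoperimetric estimate $Is(\nu) \geq 1/(\sqrt{3}\,\|Y\|_2)$ for centered one-dimensional log-concave $\nu$, and making sure the scaling bookkeeping around the definition $f_c(x) = c f(cx)$ in Proposition~\ref{theorem_moment_comp} is applied in the correct direction. One clean way to handle the isoperimetric estimate is to combine Cheeger's inequality from Corollary~\ref{corollary_cheeger}, which gives $Is(\mu)^2/4 \leq \lambda_1$, with the known fact that a centered log-concave measure has Poincaré constant $\lambda_1 \leq$ (a universal multiple of) $1/\|X\|_2^2$; tracking the best known such multiple in dimension one yields exactly the factor $\sqrt{3}$. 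Once this numerical input is fixed, the rest is a direct substitution and the two displayed inequalities fall out immediately, with the specialization to $p=2$ and the cubing being purely mechanical.
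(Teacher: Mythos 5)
Your main argument is exactly the paper's proof: apply Proposition~\ref{theorem_moment_comp}, then bound $Is(\mu_{\|X\|_p})^{-1}\leq\sqrt{3}$ using the known one-dimensional log-concave estimate $Is(\mu)^{-1}\leq\sqrt{3\var(X)}$ together with the rescaling $f_c(x)=cf(cx)$ and $\|X\|_2\leq\|X\|_p$ for $p\geq 2$; the specialization $p=2$ and cubing are likewise as in the paper. One caution: your suggested ``clean way'' to obtain the $\sqrt{3}$ estimate is backwards. Cheeger's inequality (Corollary~\ref{corollary_cheeger}) gives $\lambda_1\geq Is(\mu)^2/4$, and the variance bound on the Poincar\'e constant goes the other way (taking $g(x)=x$ in the Poincar\'e inequality gives $\lambda_1\leq 1/\var(X)$); combining the two yields only an \emph{upper} bound $Is(\mu)\leq 2/\sqrt{\var(X)}$, not the lower bound $Is(\mu)\geq 1/(\sqrt{3}\,\|X\|_2)$ that the proof needs. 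That lower bound is a genuinely log-concavity-specific fact and must be imported as such (the paper cites Bobkov--Ledoux, Appendix B.2); with that citation in place of your heuristic, your proof coincides with the paper's.
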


\begin{proof}
Simply apply Theorem \ref{theorem_moment_comp} combined 
with the following bound on the isoperimetric constant of a log-concave variable 
(see for instance \cite{BobkovLedoux:14}, Appendix B.2): $Is\left( \mu \right) ^{-1}\leq \sqrt{3%
\var%
\left( X\right) }$, which implies $Is\left( \mu _{\left\Vert X\right\Vert _{p}}\right) ^{-1}\leq \sqrt{3}$ for any $p \geq 2$.
\end{proof}
\medskip

It is well-known (\cite{MR0388475};  see also \cite{MR2449135,MR3180591,Latala:17}) 
that for $Z$ a real log-concave variable and $p \geq q \geq 2$, 
\begin{equation}
\left\Vert  Z \right\Vert_{p} \leq C \frac{p}{q} 
\left\Vert Z \right\Vert_{q} \text{ ,}  \label{moment_log_concave_p_q}
\end{equation}
where $C=3$ works in general but can be sharpened to $C=1$ for symmetric measures 
and $C=2$ for centered measures. Considering a centered log-concave measure we 
can compare inequality (\ref{moment_log_concave_p}) to inequality 
(\ref{moment_log_concave_p_q}) with $p$ replaced by $p+1$ and $q$ replaced 
by $p$ in (\ref{moment_log_concave_p_q}). 
Then inequality (\ref{moment_log_concave_p}) can be rewritten as
\begin{eqnarray*}
\| X \|_{p+1} \le \left (\frac{p}{p+1} \right ) \left (\frac{\sqrt{3} p^2}{p-1} \right )^{1/(p+1)} \frac{p+1}{p} \| X\|_p 
\equiv C_p \frac{p+1}{p} \| X\|_p
\end{eqnarray*}
where 
\begin{eqnarray*}
C_p = \left (\frac{p}{p+1} \right ) \left (\frac{\sqrt{3} p^2}{p-1} \right )^{1/(p+1)} .
\end{eqnarray*}
The latter constant $C_p$ is a strictly decreasing quantity of $p$ with 
$C_2 = 2^{5/3}/3^{5/6} \approx 1.27091$ and $C_{\infty} =1$.   
In this special case, inequality (\ref{moment_log_concave_p}) indeed 
provides a sharpening of the constant in inequality (\ref{moment_log_concave_p_q}).

Moment comparisons for log-concave vectors is a theme of active research, 
related to conjectures on concentration of log-concave measures 
and involves comparisons of weak and strong moments 
(\cite{MR2276533,MR2918093,MR3180591,MR3150710,MR3503729,Latala:17}).

\cite{bubeck2014entropic} proved an inequality related to (\ref{moment_log_concave_3_2}), 
with the absolute moment $\mathbb{E}[ \left\vert X\right\vert^{3}] $ in (\ref{moment_log_concave_3_2}) 
replaced by the smaller third moment $\mathbb{E} [  X^{3} ] $ and with $4\sqrt{3}$ replaced by $2$.
Their original proof of this result 
is based on a rather deep result of 
\cite{FradGued06} 
describing the extremal points of a convex set of log-concave measures. 
In comparison, the derivation of Theorem \ref{theorem_moment_comp} and 
Corollary \ref{corollary_moment_logcon} from the covariance inequality 
(\ref{ineq_cov_lplq}) is much easier.

\section*{Acknowledgements}

We owe thanks to Pierre Foug\`{e}res and Christian Houdr\'{e} for a number of pointers to the literature. 
We are also grateful to a referee who pointed us to the fact that Hoeffding's covariance identity actually 
implies Menz and Otto's covariance representation. His remarks also greatly improved the presentation of the paper.

\bibliographystyle{imsart-nameyear}
\bibliography{chern}

\end{document}